\newtheorem{theorem}{Theorem}
\newtheorem{lemma}[theorem]{Lemma}
\theoremstyle{definition}
\newtheorem{remark}[theorem]{Remark}
\numberwithin{theorem}{section}
\numberwithin{equation}{section}
\newcommand{\R}{\mathbb{R}}
\title{Positive normalized solutions to a singular elliptic equation with a $L^2$-supercritical nonlinearity}
\author{ Name}
\date{}
\author{
\hspace{2mm}\hspace{2mm}\\
{\scshape Siyu Chen}\footnote{The author was partially supported by the NSFC (Grant No. 12501142), ZJNSF (Grant No. LQN25A010017), and CSC (Grant No. 202409610002),} \\
{\it\small College of Data Science, Jiaxing University}\\
{\it\small 314001, Jiaxing, Zhejiang, People's Republic of China}\\
{\it\small e-mail:  sychen@zjxu.edu.cn}
\vspace{1mm}
\hspace{2mm}\hspace{2mm}\\
{\scshape Xiaojun Chang}\footnote{The corresponding author was partially supported by the NSFC (Grant No. 12471102), JLNSF (Grant No. 20250102004JC),
Research Project of the Education Department of Jilin Province (Grant No. JJKH20250296KJ)}\\
{\it\small School of Mathematics and Statistics, Northeast Normal
University}\\
{\it\small 130024 Changchun, Jilin, People's Republic of China}\\
{\it\small E-mail: changxj100@nenu.edu.cn}
\vspace{1mm}
\hspace{2mm}\hspace{2mm}\\
{ \scshape Jiazheng Zhou}\footnote{The author was partially supported by CNPq/Brazil (Grant No. 304627/2023-2) and FAP-DF/Brazil (Grant No. 00193-00002209/2023-56)} \\
{\it\small Departamento de Matem\'atica, Universidade de Bras\'ilia}\\
{\it\small 70910-900 Bras\'ilia, DF, Brazil}\\
{\it\small e-mail: zhou@mat.unb.br
}\vspace{1mm}
\vspace{1mm}
}
\begin{document}

\maketitle

\begin{abstract}
This paper studies the existence of positive normalized solutions to the singular elliptic equation
\[
-\Delta u + \lambda u = u^{-r} + u^{p-1} \quad \text{in } \Omega,
\]
with the Dirichlet boundary condition $u=0$ on $\partial\Omega$ and the normalization constraint $\int_\Omega u^2\,dx = \rho$.
Here $\Omega\subset\R^N$ ($N\ge3$) is a smooth bounded domain, $0<r<1$, $2+\frac{4}{N}<p<2^*$, where $2^*$ is the critical Sobolev exponent, and $\lambda\in\R$ is a Lagrange multiplier.
We obtain that for sufficiently small $\rho>0$, the problem admits a positive solution $(\lambda,u)\in\R\times H_0^1(\Omega)$.
The proof is based on a variational approach using a regularized functional and a careful analysis of the limiting process.
\end{abstract}

\newcommand{\msc}[1]{%
  \noindent\textbf{2020 Mathematics Subject Classification:} #1
}
\newcommand{\keywords}[1]{%
  \begin{flushleft}
  \textbf{Key words:} #1
  \end{flushleft}
}

\msc{35B09, 35J20, 35J60, 35J75.}

\noindent\textbf{Keywords:} singular elliptic equation, normalized solutions, Lagrange multiplier, existence.

\section{Introduction and main result}\label{sec:introduction}
In this paper, we study the existence of positive solutions to the following singular boundary value problem with a prescribed $L^2$-norm:

\begin{equation}\label{eq:main}
\begin{cases}
-\Delta u + \lambda u = u^{-r} + u^{p-1} & \text{in } \Omega, \\[2pt]
u = 0 & \text{on } \partial\Omega, \\[2pt]
\displaystyle \int_\Omega |u|^2 \, \mathrm{d}x = \rho,
\end{cases}
\end{equation}
where $\Omega \subset \R^N$ ($N \ge 3$) is a smooth bounded domain, $0 < r < 1$, $2_*:=2 + \frac{4}{N} < p < 2^* := \frac{2N}{N-2}$ (the critical Sobolev exponent), $\rho > 0$ is a prescribed mass, and $\lambda\in\R$ is a Lagrange multiplier.

Problem \eqref{eq:main} is motivated in particular by the study of standing wave solutions of form $\Phi(t,x)=e^{i\lambda t}u(x)$ to the following time-dependent nonlinear Schr\"odinger equation (NLS)
\begin{equation}\label{nonlinear}
\begin{aligned}
i\frac{\partial\Phi}{\partial t}+\Delta\Phi+f(|\Phi|)\Phi=0\ \ (t,x)\in\mathbb{R}\times\Omega. 
\end{aligned}
\end{equation}
The Schr\"odinger equation arises in many applications of mathematical physics, such as nonlinear optics and Bose-Einstein condensates. In this context, the domain $\Omega$ is either $\mathbb{R}^N$, or a bounded subset of $\mathbb{R}^N$(see \cite{Ag2013,BC2013,Cazenave2003,Fibich}).

A standard reduction leads to the stationary problem
\begin{equation}\label{NLS}
\begin{aligned}
-\Delta u + \lambda u &= g(u), \ \ u\in H_0^1(\Omega), 
\end{aligned}
\end{equation}
There are two main perspectives for studying \eqref{NLS}. The first, often called the fixed frequency problem, is to prescribe the parameter $\lambda\in\mathbb{R}$ and seek corresponding solutions of \eqref{NLS}\cite{Berestychi,Kwong}. The second approach takes into account a fundamental property of the time-dependent equation \eqref{nonlinear}: the conservation of mass
$$||\Psi(\cdot,t)||_{L^2(\Omega)}=||\Psi(\cdot,0)||_{L^2(\Omega)}.$$
It is therefore natural to prescribe the mass $||u||^2_{L^2(\Omega)}$ as a constraint and treat the frequency $\lambda$ as part of the unknown.
This leads to the so-called {\it normalized solutions} problem \cite{Lions1,Jean}, which is the focus of the present work.

Consider the functional $I: H_0^1(\Omega)\to\mathbb{R}$ given by
$$I(u)=\frac{1}{2}\int_\Omega|\nabla u|^2dx-\int_\Omega G(u)dx$$
on the constraint 
$$\mathcal{S}_\rho=\{u\in H_0^1(\Omega): \int_\Omega u^2dx=\rho\},$$
where $G(u)=\int_0^ug(s)ds$. Clearly, normalized solutions of \eqref{NLS} correspond to critical points of the functional $I$ on $\mathcal{S}_\rho$.

When $\Omega=\mathbb{R}^N$, much attention has been paid to the existence of normalized solutions, and many techniques have been developed recently.
In the mass-subcritical case, i.e., when $g(u)\sim|u|^{p-2}u$ as $|u|\to +\infty$, with $2<p<2_*$, the functional $I$ is bounded from below due to the Gagliardo-Nirenberg inequality. Hence one can obtain a global minimizer by minimizing $I$ on $\mathcal{S}_\rho$ (see \cite{Lions,Stuart,Shibata-2014}). As for the mass-supercritical case, i.e., $g(u)\sim|u|^{p-2}u$ as $|u|\to +\infty$, with $2_*<p<2^*$, the energy functional $I$ is unbounded both from above and below, so minimization arguments are not applicable. One is then forced to search for critical points that are not global minima. Using a mountain-pass argument on constraint for an auxiliary functional in the seminal paper \cite{Jean}, Jeanjean showed the existence of a normalized solution for \eqref{NLS} in $\mathbb{R}^N$. More problems involving normalized solutions in $\mathbb{R}^N$, for both a single equation and systems, have been studied under various assumptions; see for example \cite{AJM2021,BS2017-1,BZZ-2021,Mederski-2021,JL-2022,JL-2020,MS-2024,Soave-2020-1,Soave-2020-2,WW-2022} and the references therein.

By contrast, problem \eqref{NLS} on a bounded domain $\Omega$ is still far from being well understood. Owing to the absence of scaling invariance and the presence of uncontrollable boundary terms in the Pohozaev identities, the analysis becomes more intricate. For the pure-power nonlinearity $g(u)=|u|^{p-2}u$ in the $L^2$-supercritical and Sobolev subcritical regime, Noris et al. \cite{NTV-2014} demonstrated the existence of a positive normalized solution on a unit ball with Dirichlet boundary conditions. This was subsequently extended to general bounded domains in \cite{PV-2017}. The Sobolev critical case was studied in \cite{NTV-2019,PVY-2025}, and related results for nonlinear Schr\"odinger systems can be found in \cite{BQZ-2024,NTV-2015}. 
For more contributions to this topic, we refer to
 \cite{AS-2025,CRZ-2025,Dovetta-2025,PPVV-2021,SZ-2025} and their references.

Singular nonlinear problems were first introduced by Fulks and Maybee \cite{MR123095} as a mathematical model for describing the heat conduction in an electrical conductor and received considerable attention after the pioneering paper of Grandall, Rabinowitz and Tartar \cite{MR427826}. There is an extensive literature dealing with singular term of the type $u^{-\gamma}$ with $0<\gamma<1$. In such a case, one can associate the problem with an energy functional that, although not continuously $G\hat{a}teaux$ differentiable, is strictly convex.

Singular problems with linear or superlinear perturbations have also attracted a lot of interest. Consider
\begin{equation}\label{Perturbation}
\left\{
\begin{aligned}
-\Delta u &= \lambda u^{-\gamma} + u^{\beta} \quad \text{in } \Omega, \\
u &> 0 \quad \text{in } \Omega,~ u = 0 \quad \text{on } \partial\Omega.
\end{aligned}
\right.
\end{equation}
For $\beta \leq 1$ and $0<\gamma<1$, uniqueness was established by Brezis and Oswald in \cite{MR820658}. When $0<\gamma <1< \beta<2^*-1$, the solutions in $H_0^1(\Omega)$ are not unique; in fact, multiplicity results have been proved in \cite{ MR3058700}. For the critical case $\beta=2^*-1$, we refer the reader to \cite{MR2749428,MR1964476}.

Singular elliptic equations with power-type nonlinearities and mass constraints remain largely unexplored. The main difficulty in studying such problems comes from the interplay between the singular term $u^{-r}$ and the $L^2$-norm constraint. The singular term introduces a strong nonlinearity that becomes unbounded as $u \to 0^+$, making the problem non-Fr\'{e}chet differentiable at the origin. Meanwhile, the mass constraint imposes a nontrivial variational structure, requiring us to work on the sphere $\{u \in H_0^1(\Omega): u \ge 0,\ \|u\|_2^2 = \rho\}$.

Our main result is the existence of positive solutions to problem \eqref{eq:main}  when the mass $\rho$ is sufficiently small.

\begin{theorem}\label{thm:main}
Let $\Omega \subset \R^N$ ($N \ge 3$) be a smooth bounded domain, $0 < r < 1$, and $2_* < p < 2^*$. Then there exists $\rho_0 > 0$ such that for any $\rho \in (0, \rho_0)$, problem \eqref{eq:main} admits a solution $(\lambda, u) \in \R \times H_0^1(\Omega)$ with $u > 0$ in $\Omega$.
\end{theorem}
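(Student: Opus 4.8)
The plan is to regularize the singular nonlinearity, solve the resulting $C^1$ constrained problems by minimizing in a ``potential well'', and then pass to the limit. For $\epsilon\in(0,1]$ set $g_\epsilon(s)=(s^++\epsilon)^{-r}+(s^+)^{p-1}$ with primitive $G_\epsilon$, and consider the $C^1$ functional $I_\epsilon(u)=\frac12\int_\Omega|\nabla u|^2\,dx-\int_\Omega G_\epsilon(u)\,dx$ on $\mathcal S_\rho$. Since $(s+\epsilon)^{1-r}-\epsilon^{1-r}\le s^{1-r}$ and $G_\epsilon\ge0$, all of the forthcoming estimates on $G_\epsilon$ are uniform in $\epsilon$.

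The first step is to exhibit, for small $\rho$, a bounded region of $\mathcal S_\rho$ in which $I_\epsilon$ attains an interior minimum. Using the Gagliardo--Nirenberg inequality $\|u\|_p^p\le C\|\nabla u\|_2^{p\gamma}\rho^{p(1-\gamma)/2}$ with $\gamma=\frac{N(p-2)}{2p}$ and $p\gamma>2$ (as $p>2_*$), together with $\int_\Omega(u^+)^{1-r}\le C\rho^{(1-r)/2}$, one gets $I_\epsilon(u)\ge\psi(\|\nabla u\|_2^2)$, where $\psi(s)=\frac12 s-c\rho^{p(1-\gamma)/2}s^{p\gamma/2}-C\rho^{(1-r)/2}$. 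Fix $R>0$; for $\rho$ small enough one has $\psi(R)\ge R/4$, while the test function $u_0=\sqrt{\rho}\,\phi_1/\|\phi_1\|_2$ (with $\phi_1>0$ the first Dirichlet eigenfunction, normalized so that $\|\phi_1\|_\infty=1$) satisfies $\|\nabla u_0\|_2^2=\lambda_1\rho<R$ and $I_\epsilon(u_0)\le\frac12\lambda_1\rho<R/4$. Because $H_0^1(\Omega)\hookrightarrow L^2(\Omega)$ compactly, $\mathcal S_\rho$ is weakly closed and $I_\epsilon$ is weakly lower semicontinuous, so the infimum of $I_\epsilon$ over $\{u\in\mathcal S_\rho:\|\nabla u\|_2^2\le R\}$ is attained; the energy gap just displayed forces the minimizer $u_\epsilon$ to satisfy $\|\nabla u_\epsilon\|_2^2<R$, hence to be a constrained critical point with a Lagrange multiplier $\lambda_\epsilon$. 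Replacing $u_\epsilon$ by $|u_\epsilon|$ we may assume $u_\epsilon\ge0$.

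The heart of the argument is a set of $\epsilon$-uniform estimates. Testing the equation with $u_\epsilon$ and using $\int(u_\epsilon+\epsilon)^{-r}u_\epsilon\le\int u_\epsilon^{1-r}\le C\rho^{(1-r)/2}$ together with $\|\nabla u_\epsilon\|_2^2<R$ yields $|\lambda_\epsilon|\le\Lambda(\rho)<\infty$, uniformly in $\epsilon$. With this bound in hand I would prove a uniform lower barrier $u_\epsilon\ge\delta\phi_1$: choosing $\delta$ so small (depending only on $\rho$ through $\Lambda(\rho)$) that $(\lambda_1+\Lambda(\rho))\delta<(1+\delta)^{-r}$, a comparison computation on the set $\{u_\epsilon<\delta\phi_1\}$ --- multiplying the difference of the two equations by $(\delta\phi_1-u_\epsilon)^+$ and integrating --- shows this set is empty. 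This is the crucial point, and the main obstacle: it is precisely the uniform boundedness of $\lambda_\epsilon$ that makes the comparison constant $\delta$ independent of $\epsilon$, and hence lets the barrier survive the limit.

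Finally, along a subsequence $u_\epsilon\rightharpoonup u$ in $H_0^1(\Omega)$, $u_\epsilon\to u$ in $L^q(\Omega)$ for $q<2^*$ and a.e., and $\lambda_\epsilon\to\lambda$; the compact embedding gives $\|u\|_2^2=\rho$, and the barrier passes to the limit as $u\ge\delta\phi_1>0$ in $\Omega$. On any $K\subset\subset\Omega$ one has $(u_\epsilon+\epsilon)^{-r}\le(\delta\min_K\phi_1)^{-r}$, so dominated convergence gives $\int(u_\epsilon+\epsilon)^{-r}\varphi\to\int u^{-r}\varphi$ for $\varphi\in C_c^\infty(\Omega)$, while the remaining terms pass to the limit by weak convergence of gradients and the subcritical growth of $u^{p-1}$. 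Thus $(\lambda,u)$ solves \eqref{eq:main} weakly with $u>0$; the weak formulation extends to all $\varphi\in H_0^1(\Omega)$ because $u^{-r}\le C\,d(x,\partial\Omega)^{-r}$ together with Hardy's inequality gives $\int_\Omega u^{-r}|\varphi|\,dx\le C\|\varphi\|_{H_0^1}$ (using $r<1$), which completes the proof.
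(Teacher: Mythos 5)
Your proposal is correct, and it shares the paper's overall skeleton only up to the construction of the regularized minimizers: both minimize the regularized functional over a gradient ball inside the mass sphere, use the strict energy gap to get an interior (hence true local) minimizer, and invoke the Lagrange multiplier rule; both then bound the multipliers uniformly (the paper gets the lower bound by testing with $\varphi_1$ in Lemma~\ref{lem:boundlambda_k}, you get $|\lambda_\epsilon|\le\Lambda(\rho)$ by testing with $u_\epsilon$ itself --- both work). Where you genuinely diverge is in the uniform estimates that make the limit $\epsilon\to0$ possible. The paper proves a uniform $L^\infty$ bound by a blow-up analysis (Lemma~\ref{lem:uniform_bound}, which relies on Liouville-type nonexistence results for $-\Delta v=v^{p-1}$ in $\R^N$ and in a half-space) and then a uniform positive lower bound on compact sets via a Harnack-chain contradiction argument inside Lemma~\ref{lem:limit_solution}. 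You instead prove the uniform sub-barrier $u_\epsilon\ge\delta\phi_1$ by weak comparison with the scaled first eigenfunction: on $\{u_\epsilon<\delta\phi_1\}$ one has $0\le u_\epsilon<\delta$, so $-\Delta(\delta\phi_1-u_\epsilon)\le(\lambda_1+\Lambda(\rho))\delta-(1+\delta)^{-r}<0$ there, and testing with $(\delta\phi_1-u_\epsilon)^+$ forces that set to be null; the crucial point, exactly as you identify, is that the uniform multiplier bound makes $\delta$ independent of $\epsilon$. This single barrier then delivers at once what the paper obtains in three separate places: positivity of each $u_\epsilon$ (Lemma~\ref{lem:minimizer}), the local uniform domination $(u_\epsilon+\epsilon)^{-r}\le(\delta\min_K\phi_1)^{-r}$ needed to pass to the limit in the singular term (Lemma~\ref{lem:limit_solution}), and strict positivity of the limit (Lemma~\ref{lem:positivity}). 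Your route buys simplicity and robustness: no $L^\infty$ bound is needed at all, since $p<2^*$ lets the power term pass to the limit using strong $L^p$ convergence alone, and you get the quantitative dividend $u\ge\delta\phi_1\ge c\,d(\cdot,\partial\Omega)$, whence (via Hardy's inequality and $r<1$) the weak formulation extends to all $H_0^1(\Omega)$ test functions --- something the paper does not claim. What the paper's blow-up argument buys in exchange is the extra information $u\in L^\infty(\Omega)$ and $L^q$-convergence for every finite $q$, useful for regularity but not needed for existence. Two small points you should make explicit in a final write-up: replacing $u_\epsilon$ by $|u_\epsilon|$ preserves minimality because $G_\epsilon(|s|)\ge G_\epsilon(s)$ and $\|\nabla|u_\epsilon|\|_2=\|\nabla u_\epsilon\|_2$; and the bound $(u_\epsilon+\epsilon)^{-r}\ge(1+\delta)^{-r}$ on the comparison set uses $\epsilon\le1$ together with your normalization $\|\phi_1\|_\infty=1$.
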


The proof employs a variational approach combined with a regularization technique. We introduce a family of regularized functionals
\[
J_\varepsilon(u) = \frac12 \int_\Omega |\nabla u|^2 \,dx - \frac{1}{1-r} \int_\Omega (u+\varepsilon)^{1-r} \,dx - \frac1p \int_\Omega u^p \,dx,
\]
where $\varepsilon > 0$ removes the singularity. For each $\varepsilon$, we minimize $J_\varepsilon$ on a suitable closed subset of the constraint set (defined in Section~\ref{sec:preliminaries}) and obtain a positive solution $(\lambda_\varepsilon,u_\varepsilon)$ of the regularized problem using the Lagrange multiplier rule. The heart of the argument  hinges on deriving uniform estimates (independent of $\varepsilon$) for both $\{\lambda_\varepsilon\}$ and $\{u_\varepsilon\}$, including a key uniform $L^\infty$ bound for $\{u_{\epsilon}\}$ obtained via a blow-up analysis. Equipped with these bounds, passing to the limit $\varepsilon \to 0^+$ then yields the desired solution of the original problem.

The paper is organized as follows. Section~\ref{sec:preliminaries} introduces the variational framework and proves the existence of minimizers for the regularized problems. In Section~\ref{sec:estimates} we derive uniform bounds for the Lagrange multipliers, and for the $L^\infty$ norm of the minimizers via a blow-up argument; then we pass to the limit and verify that the limit is a positive solution of the original problem, thus completing the proof of Theorem~\ref{thm:main}. Finally, some concluding remarks are provided in  Section~\ref{sec:conclusion}.

\section{Preliminaries and existence of minimizers}\label{sec:preliminaries}

\subsection{Notations and functional setting}
Let $\Omega \subset \R^N$ ($N\ge3$) be a smooth bounded domain.
We denote by $H_0^1(\Omega)$ the usual Sobolev space with norm $\|u\|_{H_0^1} = \left( \int_\Omega |\nabla u|^2\,dx \right)^{1/2}$.
The Sobolev embedding $H_0^1(\Omega) \hookrightarrow L^q(\Omega)$ is continuous for $1 \le q \le 2^*$ and compact for $1 \le q < 2^*$.
We will frequently use the Gagliardo-Nirenberg inequality: for $2 \le q \le 2^*$, there exists a constant $C>0$ such that
\[
\|u\|_q \le C \|\nabla u\|_2^{a} \|u\|_2^{1-a}, \quad \text{with } a = N\left( \frac12 - \frac1q \right).
\]

Define the constraint set
\[
S_{\rho} = \left\{ u \in H_0^1(\Omega): u \ge 0, \ \int_\Omega u^2\,dx = \rho \right\}.
\]
For $\varepsilon > 0$, consider the regularized energy functional $J_\varepsilon : H_0^1(\Omega) \to \R$ given by
\[
J_\varepsilon(u) = \frac12 \int_\Omega |\nabla u|^2\,dx - \frac{1}{1-r} \int_\Omega (u+\varepsilon)^{1-r}\,dx - \frac1p \int_\Omega u^p\,dx.
\]
Notice that for $u \in S_{\rho}$, the term $\int_\Omega (u+\varepsilon)^{1-r}\,dx$ is well defined because $0<r<1$ and $u\ge0$.

\subsection{Geometry of the regularized functional and existence of minimizers}

The first step is to understand the geometry of $J_\varepsilon$ on the constraint set $S_{\rho}$. We begin by showing that $J_\varepsilon$ is positive on a suitable sphere in $H_0^1(\Omega)$.

\begin{lemma}\label{lem:geometry}
There exist constants $\rho_0 > 0$ and $\tau > 0$ such that for any $\rho \in (0, \rho_0)$ and any $\varepsilon \in (0,1]$, the following holds:
For every $u \in S_{\rho}$ with $\|\nabla u\|_2 = \tau$, we have $J_\varepsilon(u) > 0$.
\end{lemma}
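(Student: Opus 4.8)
The plan is to bound $J_\varepsilon$ from below on the sphere $\{u \in S_\rho : \|\nabla u\|_2 = \tau\}$ by controlling its two negative contributions separately against the kinetic term $\tfrac12\tau^2$, and then to choose $\tau$ and $\rho_0$ so that the kinetic term wins. The structural facts I would exploit are that, writing $a = N(\tfrac12-\tfrac1p)$, one has $ap = \tfrac{N(p-2)}{2} > 2$ precisely because $p > 2_*$, while at the same time $a < 1$ precisely because $p < 2^*$. The first makes the superlinear term of higher order in $\tau$ than the kinetic term, and the second guarantees that it carries a strictly positive power of $\rho$.

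For the superlinear term, I would apply the Gagliardo--Nirenberg inequality with $q=p$ together with $\|u\|_2^2=\rho$ to get $\tfrac1p\|u\|_p^p \le \tfrac{C^p}{p}\tau^{ap}\rho^{(1-a)p/2}$, where both exponents satisfy $ap>2$ and $(1-a)p/2>0$. For the singular term I would use that $t\mapsto t^{1-r}$ is concave and vanishes at $t=0$, hence subadditive, so that $(u+\varepsilon)^{1-r}\le u^{1-r}+\varepsilon^{1-r}\le u^{1-r}+1$ uniformly for $\varepsilon\in(0,1]$; integrating and applying H\"older's inequality with conjugate exponents $\tfrac{2}{1-r}$ and $\tfrac{2}{1+r}$ bounds $\int_\Omega u^{1-r}\,dx$ by $\rho^{(1-r)/2}|\Omega|^{(1+r)/2}$. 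Altogether this yields the uniform-in-$\varepsilon$ lower bound
\[
J_\varepsilon(u) \ge \frac12\tau^2 - \frac{|\Omega|}{1-r} - \frac{|\Omega|^{(1+r)/2}}{1-r}\,\rho^{(1-r)/2} - \frac{C^p}{p}\,\tau^{ap}\rho^{(1-a)p/2}.
\]

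The decisive and, to my mind, most delicate step is the choice of the constants, because the geometry here differs from the nonsingular case. The subadditivity bound leaves behind the fixed constant $\tfrac{|\Omega|}{1-r}$, which does \emph{not} vanish as $\rho\to0$; consequently one cannot make $J_\varepsilon$ positive by shrinking $\tau$. Instead I would first fix $\tau$ once and for all, large enough that $\tfrac12\tau^2 - \tfrac{|\Omega|}{1-r}\ge 1$. With $\tau$ now frozen, the remaining two terms are $o(1)$ as $\rho\to0^+$: they carry the positive powers $\rho^{(1-r)/2}$ and $\rho^{(1-a)p/2}$, whereas $\tau^{ap}$ is only a fixed (large) constant. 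Choosing $\rho_0$ small enough that their sum stays below $1$ for every $\rho\in(0,\rho_0)$ then forces $J_\varepsilon(u)>0$, which is the claim.

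Finally, I would record that the sphere in question is nonempty for small $\rho$, so that the statement is not vacuous: on $S_\rho$ the Poincar\'e inequality gives $\|\nabla u\|_2\ge\sqrt{\lambda_1\rho}$, which drops below the fixed $\tau$ once $\rho$ is small, and since $u\mapsto\|\nabla u\|_2$ ranges continuously over $[\sqrt{\lambda_1\rho},\infty)$ on the path-connected set $S_\rho$, the value $\tau$ is indeed attained.
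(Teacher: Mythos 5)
Your proposal is correct, and its core estimates coincide with the paper's: both control the power term via Gagliardo--Nirenberg together with $\|u\|_2^2=\rho$ (producing the coefficient $\rho^{(1-a)p/2}\tau^{ap}$) and the singular term via subadditivity of $t\mapsto t^{1-r}$ plus H\"older's inequality (producing $|\Omega|+|\Omega|^{(1+r)/2}\rho^{(1-r)/2}$, uniformly in $\varepsilon\in(0,1]$), so you arrive at exactly the paper's lower bound $g(t)=\tfrac12 t^2-\tfrac{C_1}{1-r}-\tfrac{C_2}{p}t^{ap}$ evaluated at $t=\tau$. Where you genuinely diverge is the endgame. The paper optimizes $g$ in $t$: it locates the unique maximizer $t_0=\left(\tfrac{1}{C_2 a}\right)^{1/(ap-2)}$, shows $g(t_0)\to+\infty$ as $\rho\to0$ (this is where $ap>2$ and $C_2\to0$ are used), and sets $\tau=t_0$ --- a choice that depends on $\rho$ and blows up as $\rho\to0$. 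You instead freeze $\tau$ first, beating only the $\rho$-independent obstruction $\tfrac{|\Omega|}{1-r}$, and then shrink $\rho$, exploiting that both remaining negative terms carry strictly positive powers of $\rho$. Your route is more elementary (no calculus on $g$; indeed you never actually need $ap>2$, only $a<1$, i.e. $p<2^*$), and it has a concrete advantage: it delivers a single $\tau$ valid for every $\rho\in(0,\rho_0)$, which is what the lemma's quantifier order literally asserts, whereas the paper's own proof only produces a $\rho$-dependent $\tau$ (harmless for the rest of the paper, where $\rho$ is fixed, but a mismatch with the statement as written). What the paper's choice buys in exchange is the sharpest threshold its method allows, since it sits at the maximum of $g$; your $\tau$ is merely some fixed admissible value, but nothing in the sequel requires more. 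Your closing observation that the sphere $\{u\in S_\rho:\|\nabla u\|_2=\tau\}$ is nonempty (Poincar\'e bound plus path-connectedness of $S_\rho$) does not appear in the paper and is not needed for the lemma, but it is a welcome check that the statement is not vacuous.
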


\begin{proof}
For $u \in S_{\rho}$, using the Gagliardo-Nirenberg inequality, there exists $C>0$ such that
\[
\|u\|_p \le C \|\nabla u\|_2^{a} \|u\|_2^{1-a}, \quad a = N\left( \frac12 - \frac1p \right) \in (0,1).
\]
Since $2+4/N < p < 2^*$ and $a = N(\frac12 - \frac1p)$, we have $ap = \frac{Np}{2} - N > 2$.
Since $\|u\|_2^2 = \rho$, we obtain
\[
\int_\Omega u^p \,dx \le C^p \rho^{(1-a)p/2} \|\nabla u\|_2^{ap} =: C_2 \|\nabla u\|_2^{ap},
\]
with $C_2 = C^p \rho^{(1-a)p/2}$.

For the singular term, it follows from $0<r<1$ and H\"older's inequality that
\[
\int_\Omega (u+\varepsilon)^{1-r} \,dx \le \int_\Omega u^{1-r} \,dx + \varepsilon^{1-r} |\Omega| \le \rho^{(1-r)/2} |\Omega|^{(1+r)/2} + |\Omega| =: C_1,
\]
where we used $\varepsilon \le 1$.
Thus,
\[
J_\varepsilon(u) \ge \frac12 \|\nabla u\|_2^2 - \frac{C_1}{1-r} - \frac{C_2}{p} \|\nabla u\|_2^{ap} =: g(\|\nabla u\|_2),
\]
with $g(t) = \frac12 t^2 - \frac{C_1}{1-r} - \frac{C_2}{p} t^{ap}$ for $t \ge 0$.
Clearly, $g'(t) = t - C_2 a t^{ap-1} = t(1 - C_2 a t^{ap-2})$.
For $t>0$, setting $g'(t)=0$ gives $1 - C_2 a t^{ap-2} = 0$, hence the unique positive critical point
\[
t_0 = \left( \frac{1}{C_2 a} \right)^{1/(ap-2)} > 0.
\]

Now, we analyze the sign of $g'(t)$. For $0 < t < t_0$, the inequality
\[
t^{ap-2} < t_0^{ap-2} = \frac{1}{C_2 a}
\]
implies $C_2 a t^{ap-2} < 1$. Consequently, 
\[
g'(t) = t(1 - C_2 a t^{ap-2}) > 0.
\]
For $t > t_0$, we have
\[
t^{ap-2} > t_0^{ap-2} = \frac{1}{C_2 a},
\]
hence $C_2 a t^{ap-2} > 1$, and therefore
\[
g'(t) = t(1 - C_2 a t^{ap-2}) < 0.
\]
Thus, $g(t)$ increases strictly on $(0, t_0)$ and decreases strictly on $(t_0, \infty)$.
Since $ap>2$, we have $\lim_{t\to0^+} g(t) = -\frac{C_1}{1-r} < 0$ and $\lim_{t\to\infty} g(t) = -\infty$.
Therefore, $t_0$ is the unique global maximum point of $g$ on $[0,\infty)$.

Evaluating $t_0$, we have
\[
  g(t_0) = \frac12 t_0^2 - \frac{C_1}{1-r} - \frac{C_2}{p} t_0^{ap}.
\]
From $t_0^{ap} = t_0^2 \cdot t_0^{ap-2} = t_0^2 \cdot \frac{1}{C_2 a}$, we obtain
\begin{equation}\label{eq:g_max}
g(t_0) = \left( \frac12 - \frac{1}{ap} \right) t_0^2 - \frac{C_1}{1-r}.
\end{equation}
Again, since $ap>2$, we have $\frac12 - \frac{1}{ap} > 0$.

Now, as $\rho \to 0$, we obtain $C_2 = C^p \rho^{(1-a)p/2} \to 0$ and
$C_1 = \rho^{(1-r)/2} |\Omega|^{(1+r)/2} + |\Omega| \to |\Omega|$.
Consequently, $t_0 = \left( \frac{1}{C_2 a} \right)^{1/(ap-2)} \to \infty$.
For the behavior of $g(t_0)$ as $\rho \to 0$. Recall \eqref{eq:g_max},
since $\frac12 - \frac{1}{ap} > 0$ and $t_0^2 \to \infty$, the first term tends to $+\infty$,
while the second term remains bounded because $C_1 \to |\Omega|$. Thus,
\[
\lim_{\rho\to 0} g(t_0) = +\infty.
\]
In particular, there exists $\rho_0 > 0$ such that for every $\rho \in (0, \rho_0)$, $g(t_0) > 0$.

Set $\tau = t_0$.
Then we have $J_\varepsilon(u) \ge g(\tau) > 0$ for any $u \in S_{\rho}$ with $\|\nabla u\|_2 = \tau$.
\end{proof}

\begin{remark} The function $g(t)$ provides only a lower bound for $J_\varepsilon(u)$ when $\|\nabla u\|_2 = t$.
The monotonicity of $g$ on $(0, t_0)$ does not imply any monotonicity of $J_\varepsilon$ itself; it merely ensures that on the sphere $\|\nabla u\|_2 = \tau := t_0$, the functional $J_\varepsilon$ is positive.
\end{remark}

For each $\varepsilon \in (0,1]$, we now consider the closed subset of $S_{\rho}$ defined by
\[
K = \{ u \in S_{\rho} : \|\nabla u\|_2 \le \tau \},
\]
where $\tau$ is given in Lemma~\ref{lem:geometry}. The next lemma shows that the infimum of $J_\varepsilon$ on $K$ is negative.

\begin{lemma}\label{lem:negative_energy}
For each $\varepsilon \in (0,1]$, we have $c_\varepsilon := \inf_{u \in K} J_\varepsilon(u) < 0$.
\end{lemma}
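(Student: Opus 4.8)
The plan is to exhibit, for every $\varepsilon \in (0,1]$, a single admissible function $u \in K$ on which $J_\varepsilon$ is already strictly negative; since $c_\varepsilon$ is an infimum, producing one such competitor suffices. The natural candidate is a fixed profile rescaled to have mass $\rho$. Concretely, I would fix once and for all a function $\phi \in C_c^\infty(\Omega)$ with $\phi \ge 0$ and $\phi \not\equiv 0$, and set
\[
u_\rho = \sqrt{\rho}\,\frac{\phi}{\|\phi\|_2},
\]
so that $u_\rho \ge 0$ and $\|u_\rho\|_2^2 = \rho$, i.e.\ $u_\rho \in S_{\rho}$.

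First I would verify that $u_\rho \in K$. A direct computation gives $\|\nabla u_\rho\|_2 = \sqrt{\rho}\,\|\nabla\phi\|_2/\|\phi\|_2 \to 0$ as $\rho \to 0^+$, whereas $\tau = t_0 \to \infty$ as $\rho \to 0^+$ by the analysis carried out in Lemma~\ref{lem:geometry}. Hence, after shrinking $\rho_0$ if necessary, $\|\nabla u_\rho\|_2 \le \tau$ for all $\rho \in (0,\rho_0)$, so $u_\rho \in K$ is an admissible competitor for $c_\varepsilon$.

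The decisive point is to control the singular term uniformly in $\varepsilon$. Since $s \mapsto s^{1-r}$ is increasing on $[0,\infty)$ and $u_\rho,\varepsilon \ge 0$, one has the pointwise bound $(u_\rho+\varepsilon)^{1-r} \ge u_\rho^{1-r}$, which removes $\varepsilon$ altogether from the upper estimate:
\[
J_\varepsilon(u_\rho) \le \frac12 \|\nabla u_\rho\|_2^2 - \frac{1}{1-r}\int_\Omega u_\rho^{1-r}\,dx - \frac1p \int_\Omega u_\rho^p\,dx.
\]
Substituting the scaling of $u_\rho$ turns the right-hand side into $A\rho - B\rho^{(1-r)/2} - C\rho^{p/2}$, where $A,B,C>0$ depend only on $\phi$, $r$ and $p$; in particular $B = \frac{1}{1-r}\|\phi\|_2^{-(1-r)}\int_\Omega \phi^{1-r}\,dx > 0$. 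Because $0<r<1$ and $p>2$, the exponents obey $\frac{1-r}{2} < 1 < \frac{p}{2}$, so factoring out $\rho^{(1-r)/2}$ yields
\[
J_\varepsilon(u_\rho) \le \rho^{(1-r)/2}\Bigl[\, A\rho^{(1+r)/2} - B - C\rho^{(p-1+r)/2}\,\Bigr],
\]
and the bracket tends to $-B<0$ as $\rho \to 0^+$.

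Consequently, after shrinking $\rho_0$ once more if needed, the bracket stays negative for every $\rho \in (0,\rho_0)$, whence $J_\varepsilon(u_\rho) < 0$ \emph{uniformly} in $\varepsilon \in (0,1]$, and therefore $c_\varepsilon \le J_\varepsilon(u_\rho) < 0$. I expect the only genuinely delicate point to be this uniformity in $\varepsilon$, which is exactly what the monotonicity bound $(u_\rho+\varepsilon)^{1-r}\ge u_\rho^{1-r}$ secures; the rest is the elementary observation that, for small mass, the singular contribution (of order $\rho^{(1-r)/2}$) dominates both the kinetic term (order $\rho$) and the superlinear term (order $\rho^{p/2}$).
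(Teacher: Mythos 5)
Your proposal is correct and follows essentially the same route as the paper's proof: a fixed nonnegative test function rescaled to mass $\rho$, the monotonicity bound $(u+\varepsilon)^{1-r}\ge u^{1-r}$ to obtain an $\varepsilon$-independent upper bound, and the dominance of the sublinear term $\rho^{(1-r)/2}$ over the kinetic term $\rho$ for small mass. Your verification that the competitor lies in $K$ (via $\|\nabla u_\rho\|_2\to 0$ while $\tau=t_0\to\infty$ as $\rho\to 0^+$) is in fact slightly cleaner than the paper's wording, but the argument is the same.
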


\begin{proof}
We construct a specific function in $K$ that has negative energy. Let $\varphi \in C_c^\infty(\Omega)$ be a nonnegative function with $\int_\Omega \varphi^2\,dx > 0$. 
Define 
\[
c = \sqrt{ \frac{\rho}{\int_\Omega \varphi^2 \,dx} },
\]
and set $u_0 = c \varphi$. Then $u_0 \ge 0$ and
\[
\int_\Omega u_0^2 \,dx = c^2 \int_\Omega \varphi^2 \,dx = \rho,
\]
that is, $u_0 \in S_{\rho}$.

Next, we show that $u_0\in K$, i.e., $\|\nabla u_0\|_2 \le \tau$. Since
\[
\|\nabla u_0\|_2 = c \|\nabla \varphi\|_2 = \sqrt{\frac{\rho}{\int_\Omega \varphi^2 \,dx}} \|\nabla \varphi\|_2,
\]
by choosing $\rho$ sufficiently small (we may further reduce $\rho_0$ if necessary such that $\rho<\rho_0$), we can make $c$ arbitrarily small. Moreover, we may select $\varphi$ such that $\|\nabla \varphi\|_2$ is small. Thus, for sufficiently small $\rho$, we have $\|\nabla u_0\|_2 \le \tau$. Thus, $u_0 \in K$.

Now we compute $J_\varepsilon(u_0)$:
\[
J_\varepsilon(u_0) = \frac12 \int_\Omega |\nabla u_0|^2 \,dx - \frac{1}{1-r} \int_\Omega (u_0+\varepsilon)^{1-r} \,dx - \frac{1}{p} \int_\Omega u_0^p \,dx.
\]
Substituting $u_0 = c\varphi$, we obtain
\[
J_\varepsilon(u_0) = \frac12 c^2 \|\nabla \varphi\|_2^2 - \frac{1}{1-r} \int_\Omega (c\varphi + \varepsilon)^{1-r} \,dx - \frac{1}{p} c^p \|\varphi\|_p^p.
\]

We now estimate the singular term from below independently of $\varepsilon$. Since $c\varphi \ge 0$ and $\varepsilon \ge 0$, we have $(c\varphi + \varepsilon)^{1-r} \ge (c\varphi)^{1-r}$. Therefore,
\[
\int_\Omega (c\varphi + \varepsilon)^{1-r} \,dx \ge \int_\Omega (c\varphi)^{1-r} \,dx = c^{1-r} \int_\Omega \varphi^{1-r} \,dx.
\]
Note that $\int_\Omega \varphi^{1-r} \,dx > 0$ because $\varphi$ is nonnegative, not identically zero, and $1-r > 0$.
Thus,
\[
J_\varepsilon(u_0) \le \frac12 c^2 \|\nabla \varphi\|_2^2 - \frac{1}{1-r} c^{1-r} \int_\Omega \varphi^{1-r} \,dx.
\]
Since $0 < 1-r < 1$, the term $c^{1-r}$ decays more slowly than $c^2$ as $c \to 0$. Consequently, we can choose $c$ (and hence $\rho$) sufficiently small, independent of $\varepsilon$, such that
\[
\frac12 c^2 \|\nabla \varphi\|_2^2 < \frac{1}{1-r} c^{1-r} \int_\Omega \varphi^{1-r} \,dx.
\]
For such a choice, we have $J_\varepsilon(u_0) < 0$.

Since $u_0 \in K$, we conclude that
\[
c_\varepsilon = \inf_{u \in K} J_\varepsilon(u) \le J_\varepsilon(u_0) < 0.
\]
This completes the proof.
\end{proof}

With these geometric properties established, we first construct a minimizing sequence. Applying a compactness argument to this sequence then yields a minimizer.

\begin{lemma}\label{lem:minimizing_sequence}
For each $\varepsilon \in (0,1]$, there exists a minimizing sequence $\{u_n\} \subset K$ such that
\[
J_\varepsilon(u_n) \to c_\varepsilon, \quad \|\nabla u_n\|_2 < \tau,\ \forall n,
\]
and $\{u_n\}$ is bounded in $H_0^1(\Omega)$.
\end{lemma}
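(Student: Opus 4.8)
The plan is to read the sequence straight off the definition of the infimum and then exploit the sign gap between the interior and the boundary sphere of $K$ that Lemmas~\ref{lem:geometry} and~\ref{lem:negative_energy} have already established. Concretely, I first note that $c_\varepsilon$ is a genuine (finite) real number: on $K$ we have $\|\nabla u\|_2 \le \tau$, so the bound $J_\varepsilon(u) \ge g(\|\nabla u\|_2)$ derived in the proof of Lemma~\ref{lem:geometry} together with the continuity of $g$ on the compact interval $[0,\tau]$ gives $J_\varepsilon(u) \ge \min_{[0,\tau]} g > -\infty$ for all $u \in K$; hence $J_\varepsilon$ is bounded below on $K$ and $c_\varepsilon > -\infty$. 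Since also $c_\varepsilon < 0$ by Lemma~\ref{lem:negative_energy}, the definition of the infimum directly furnishes a sequence $\{v_n\} \subset K$ with $J_\varepsilon(v_n) \to c_\varepsilon$.

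The only substantive point is to upgrade the constraint $\|\nabla v_n\|_2 \le \tau$ to the strict inequality $\|\nabla v_n\|_2 < \tau$. Here I would invoke the strict positivity on the sphere: by Lemma~\ref{lem:geometry}, $J_\varepsilon(u) > 0$ whenever $u \in S_\rho$ and $\|\nabla u\|_2 = \tau$. Because $J_\varepsilon(v_n) \to c_\varepsilon < 0$, there is an index $N_0$ with $J_\varepsilon(v_n) < 0$ for all $n \ge N_0$. For such $n$ the function $v_n$ cannot satisfy $\|\nabla v_n\|_2 = \tau$, since the functional is positive on that sphere; combined with $v_n \in K$ this forces $\|\nabla v_n\|_2 < \tau$. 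Relabelling $u_n := v_{n+N_0}$ produces a minimizing sequence lying entirely in the open ball $\{\,\|\nabla u\|_2 < \tau\,\}$.

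Boundedness in $H_0^1(\Omega)$ is then immediate: with the norm $\|u\|_{H_0^1} = \|\nabla u\|_2$ adopted here, the sequence satisfies $\|u_n\|_{H_0^1} = \|\nabla u_n\|_2 < \tau$ for every $n$, so $\{u_n\}$ is bounded, as claimed.

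I do not expect a serious analytic obstacle in this lemma; it is essentially a bookkeeping consequence of the geometry already in place. The one step that must be handled carefully is the passage to the strict inequality, and it is precisely the sign separation---negativity of $c_\varepsilon$ versus positivity of $J_\varepsilon$ on $\{\|\nabla u\|_2 = \tau\}$---that delivers it. This strict inequality is the feature worth emphasizing, as it is what will later make the outer constraint $\|\nabla u\|_2 \le \tau$ inactive at the minimizer and thereby allow a clean application of the Lagrange multiplier rule without boundary terms.
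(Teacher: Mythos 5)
Your proof is correct and follows essentially the same route as the paper: extract a minimizing sequence from the definition of the infimum, use $c_\varepsilon<0$ (Lemma~\ref{lem:negative_energy}) together with the positivity of $J_\varepsilon$ on the sphere $\{\|\nabla u\|_2=\tau\}$ (Lemma~\ref{lem:geometry}) to force the strict inequality $\|\nabla u_n\|_2<\tau$, and conclude boundedness trivially. Your additional verification that $c_\varepsilon>-\infty$ via $J_\varepsilon\ge g(\|\nabla u\|_2)\ge\min_{[0,\tau]}g$ is a harmless (indeed welcome) refinement that the paper leaves implicit.
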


\begin{proof}
It follows from the definition of $c_{\varepsilon}$ and Lemma~\ref{lem:negative_energy}, that there exists a sequence $\{u_n\} \subset K$ such that
\[
J_\varepsilon(u_n) \to c_\varepsilon < 0.
\]
Since $c_\varepsilon < 0$, we may assume without loss of generality that $J_\varepsilon(u_n) < 0$ for all $n$. Now, if $\|\nabla u_n\|_2 = \tau$ for some $n$, then by Lemma~\ref{lem:geometry} we would have $J_\varepsilon(u_n) > 0$, a contradiction. Hence, $\|\nabla u_n\|_2 < \tau$ for all $n$. So $\{u_n\}$ is bounded in $H_0^1(\Omega)$. This completes the proof.
\end{proof}

\begin{lemma}\label{lem:minimizer}
For each $\varepsilon \in (0,1]$, there exists $u_\varepsilon \in K$ such that
\[
J_\varepsilon(u_\varepsilon) = c_\varepsilon, \quad \|\nabla u_\varepsilon\|_2 < \tau,
\]
and $u_\varepsilon > 0$ in $\Omega$.
\end{lemma}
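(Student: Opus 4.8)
The plan is to realize $c_\varepsilon$ as the value of $J_\varepsilon$ at the weak limit of the bounded minimizing sequence produced by Lemma~\ref{lem:minimizing_sequence}, and then to combine the strict interior bound $\|\nabla u_\varepsilon\|_2<\tau$ with a one-sided variational argument to obtain strict positivity. I would start from the sequence $\{u_n\}\subset K$ of Lemma~\ref{lem:minimizing_sequence}. Since it is bounded in $H_0^1(\Omega)$, after passing to a subsequence it converges weakly to some $u_\varepsilon$ in $H_0^1(\Omega)$, strongly in $L^q(\Omega)$ for every $1\le q<2^*$ by the compact embedding, and a.e.\ in $\Omega$. The a.e.\ convergence together with $u_n\ge 0$ gives $u_\varepsilon\ge 0$; the strong $L^2$ convergence gives $\int_\Omega u_\varepsilon^2\,dx=\rho$; and weak lower semicontinuity of the Dirichlet seminorm gives $\|\nabla u_\varepsilon\|_2\le\liminf_n\|\nabla u_n\|_2\le\tau$. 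Hence $u_\varepsilon\in K$, and in particular $u_\varepsilon\not\equiv 0$ since $\rho>0$.

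Next I would show $J_\varepsilon(u_\varepsilon)=c_\varepsilon$ by passing to the limit term by term. The Dirichlet term is weakly lower semicontinuous. Because $p<2^*$, the compact embedding gives $u_n\to u_\varepsilon$ in $L^p(\Omega)$, so $\int_\Omega u_n^p\,dx\to\int_\Omega u_\varepsilon^p\,dx$. For the singular term I would use the elementary inequality $\abs{(a+\varepsilon)^{1-r}-(b+\varepsilon)^{1-r}}\le\abs{a-b}^{1-r}$ for $a,b\ge0$ (subadditivity of the concave power $t\mapsto t^{1-r}$) together with H\"older's inequality with exponent $2/(1-r)$, which yields $\int_\Omega\abs{(u_n+\varepsilon)^{1-r}-(u_\varepsilon+\varepsilon)^{1-r}}\,dx\le\abs{\Omega}^{(1+r)/2}\|u_n-u_\varepsilon\|_2^{1-r}\to0$. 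Combining these, $J_\varepsilon(u_\varepsilon)\le\liminf_n J_\varepsilon(u_n)=c_\varepsilon$, while $u_\varepsilon\in K$ forces $J_\varepsilon(u_\varepsilon)\ge c_\varepsilon$; hence equality. The strict bound $\|\nabla u_\varepsilon\|_2<\tau$ then follows by contradiction: if $\|\nabla u_\varepsilon\|_2=\tau$, Lemma~\ref{lem:geometry} would give $J_\varepsilon(u_\varepsilon)>0$, contradicting $c_\varepsilon<0$ from Lemma~\ref{lem:negative_energy}.

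For positivity I would exploit that the gradient constraint is inactive. Given $\phi\in H_0^1(\Omega)$ with $\phi\ge0$ and $t>0$ small, the normalized perturbation $v_t=\sqrt{\rho}\,(u_\varepsilon+t\phi)/\|u_\varepsilon+t\phi\|_2$ is nonnegative, lies in $S_\rho$, and satisfies $\|\nabla v_t\|_2<\tau$ by continuity (using the strict inequality just obtained); hence $v_t\in K$ and $J_\varepsilon(v_t)\ge J_\varepsilon(u_\varepsilon)$. Since $\varepsilon>0$ keeps the integrand $(v_t+\varepsilon)^{1-r}$ and its $t$-derivative $(1-r)(v_t+\varepsilon)^{-r}\,\partial_t v_t$ bounded, differentiation under the integral at $t=0^+$ is justified and the resulting one-sided inequality reads
\[
\int_\Omega\nabla u_\varepsilon\cdot\nabla\phi\,dx+\lambda_\varepsilon\int_\Omega u_\varepsilon\phi\,dx\ \ge\ \int_\Omega(u_\varepsilon+\varepsilon)^{-r}\phi\,dx+\int_\Omega u_\varepsilon^{p-1}\phi\,dx
\]
for all such $\phi$, with $\lambda_\varepsilon=\rho^{-1}\big(\int_\Omega(u_\varepsilon+\varepsilon)^{-r}u_\varepsilon\,dx+\int_\Omega u_\varepsilon^p\,dx-\|\nabla u_\varepsilon\|_2^2\big)$. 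Thus $u_\varepsilon$ is a nonnegative, nontrivial weak supersolution of $-\Delta u_\varepsilon+\lambda_\varepsilon u_\varepsilon=(u_\varepsilon+\varepsilon)^{-r}+u_\varepsilon^{p-1}$, whose right-hand side is strictly positive. Writing $-\Delta u_\varepsilon+\abs{\lambda_\varepsilon}u_\varepsilon\ge -\Delta u_\varepsilon+\lambda_\varepsilon u_\varepsilon\ge0$ (since $(\abs{\lambda_\varepsilon}-\lambda_\varepsilon)u_\varepsilon\ge0$) and invoking the strong maximum principle—after the standard elliptic bootstrap, which is available here because for fixed $\varepsilon>0$ the singular term is bounded by $\varepsilon^{-r}$ and the power is subcritical—yields $u_\varepsilon>0$ in $\Omega$.

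I expect the positivity step to be the main obstacle. The delicate points are the justification of the one-sided differentiation of the singular term and, above all, the presence of the sign constraint $u\ge0$, which a priori yields only a variational \emph{inequality} rather than the full Euler–Lagrange equation. The feature that makes this tractable is that the regularized source $(u_\varepsilon+\varepsilon)^{-r}+u_\varepsilon^{p-1}$ is strictly positive, so the supersolution property alone—without recovering the equation—already suffices to apply the maximum principle and conclude strict positivity.
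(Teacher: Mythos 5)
Your minimization and strict-bound steps essentially coincide with the paper's: the same weak/compact convergence scheme, the same lower semicontinuity, and the same contradiction via Lemma~\ref{lem:geometry} and Lemma~\ref{lem:negative_energy}. Your treatment of the singular term (subadditivity of $t\mapsto t^{1-r}$ plus H\"older) is a clean and valid substitute for the paper's Vitali argument. Where you genuinely diverge is positivity. The paper applies the Lagrange multiplier rule on $S_\rho$, obtains the full Euler--Lagrange equation, gets $u_\varepsilon\in C^{1,\alpha}$ by elliptic regularity, and excludes an interior zero through the pointwise identity $-\Delta u_\varepsilon(x_0)\le 0<\varepsilon^{-r}$. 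You instead use only one-sided, mass-normalized perturbations, which is actually \emph{more} scrupulous than the paper about the sign constraint $u\ge 0$ built into $S_\rho$ (the set is not a smooth manifold, so the multiplier rule as the paper invokes it needs justification); your computation of the variational inequality and of the expression for $\lambda_\varepsilon$ is correct.

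There is, however, a flaw in your final step: you cannot run ``the standard elliptic bootstrap'' on a supersolution. Regularity theory needs the equation (two-sided information); an $H^1$ function satisfying only $-\Delta u_\varepsilon+\abs{\lambda_\varepsilon}u_\varepsilon\ge 0$ weakly need not be continuous. What the supersolution property does give --- via the strong maximum principle / weak Harnack inequality for $W^{1,2}$ supersolutions --- is $\operatorname{ess\,inf}_{W}u_\varepsilon>0$ for every compact $W\subset\Omega$, since $u_\varepsilon\not\equiv 0$; that requires no continuity and no bootstrap, but it yields positivity only in the a.e.\ sense, whereas the paper's conclusion is pointwise (for a $C^{1,\alpha}$ representative). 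The clean repair, which also recovers what the rest of the paper needs: once you have the local essential lower bound $u_\varepsilon\ge\delta>0$ on $\operatorname{supp}\psi$, the two-sided perturbations $u_\varepsilon+t\psi$ with $\psi\in C_c^\infty(\Omega)$ and $\abs{t}$ small remain nonnegative, so repeating your computation for both signs of $t$ upgrades the inequality to the full equation; bootstrap and pointwise positivity then follow. Note finally that Section~\ref{sec:estimates} (e.g.\ Lemma~\ref{lem:boundlambda_k} and Lemma~\ref{lem:limit_solution}) cites $(\lambda_\varepsilon,u_\varepsilon)$ as a solution of the regularized \emph{equation}; your proof as written never produces that equation, so stopping at the supersolution property, while enough for the literal (a.e.) statement of this lemma, would leave a gap downstream in the paper's architecture.
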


\begin{proof}
Let $\{u_n\}$ be the minimizing sequence from Lemma~\ref{lem:minimizing_sequence}. 
Since $\{u_n\}$ is bounded in $H_0^1(\Omega)$, there exists a subsequence (still denoted by $\{u_n\}$) and $u_\varepsilon \in H_0^1(\Omega)$ such that $u_n \rightharpoonup u_\varepsilon$ weakly in $H_0^1(\Omega)$. 
By compact embedding $H_0^1(\Omega) \hookrightarrow L^q(\Omega)$ for $q < 2^*$, we have $u_n \to u_\varepsilon$ strongly in $L^q(\Omega)$ for all $q \in [1, 2^*)$. 
In particular, $u_n \to u_\varepsilon$ in $L^2(\Omega)$, so $\int_\Omega u_\varepsilon^2 \,dx = \rho$, and since $u_n \ge 0$, we have $u_\varepsilon \ge 0$ a.e. 
Thus $u_\varepsilon \in S_\rho$.
By weak lower semicontinuity of the norm, 
$\|\nabla u_\varepsilon\|_2 \le \liminf_{n\to\infty} \|\nabla u_n\|_2 \le \tau$, 
so $u_\varepsilon \in K$.

Now, we show that $J_\varepsilon(u_n) \to J_\varepsilon(u_\varepsilon)$. 
Write
\[
J_\varepsilon(u_n) = \frac12 \|\nabla u_n\|_2^2 - \frac{1}{1-r} \int_\Omega (u_n+\varepsilon)^{1-r} \,dx - \frac1p \|u_n\|_p^p.
\]
Since $u_n \to u_\varepsilon$ strongly in $L^p(\Omega)$, we have $\|u_n\|_p^p \to \|u_\varepsilon\|_p^p$. 
For the singular term, note that $(u_n+\varepsilon)^{1-r} \to (u_\varepsilon+\varepsilon)^{1-r}$ almost everywhere. 
To apply Vitali's convergence theorem, we verify uniform integrability. 
Because $\{u_n\}$ is bounded in $H_0^1(\Omega)$, it is also bounded in $L^{2^*}(\Omega)$. 
Since $0<r<1$, we have $0<1-r<1$. The sequence $\{(u_n+\varepsilon)^{1-r}\}$ is bounded in $L^{\frac{2^*}{1-r}}(\Omega)$ because
\[
\int_\Omega (u_n+\varepsilon)^{(1-r)\cdot \frac{2^*}{1-r}} \,dx = \int_\Omega (u_n+\varepsilon)^{2^*} \,dx \le C,
\]
where $C$ depends on the bound of $\{u_n\}$ in $L^{2^*}(\Omega)$ and $\varepsilon$. 
Since $\frac{2^*}{1-r} > 1$, boundedness in $L^{\frac{2^*}{1-r}}(\Omega)$ implies uniform integrability. 
Hence, $\{(u_n+\varepsilon)^{1-r}\}$ is uniformly integrable, and Vitali's theorem gives
\[
\int_\Omega (u_n+\varepsilon)^{1-r} \,dx \to \int_\Omega (u_\varepsilon+\varepsilon)^{1-r} \,dx.
\]
For the gradient term, we have weak lower semicontinuity:
\[
\frac12 \|\nabla u_\varepsilon\|_2^2 \le \liminf_{n\to\infty} \frac12 \|\nabla u_n\|_2^2.
\]
Therefore,
\[
c_\varepsilon \le J_\varepsilon(u_\varepsilon) \le \liminf_{n\to\infty} J_\varepsilon(u_n) = c_\varepsilon,
\]
so $J_\varepsilon(u_\varepsilon) = c_\varepsilon$. 
Moreover, this equality forces $\|\nabla u_\varepsilon\|_2^2 = \lim_{n\to\infty} \|\nabla u_n\|_2^2$, which together with the weak convergence implies $u_n \to u_\varepsilon$ strongly in $H_0^1(\Omega)$.

Since $c_\varepsilon < 0$ and by Lemma~\ref{lem:geometry} we have $J_\varepsilon(u) > 0$ for any $u \in S_\rho$ with $\|\nabla u\|_2 = \tau$, it must be that $\|\nabla u_\varepsilon\|_2 < \tau$.
Thus, $u_\varepsilon$ is a minimizer of $J_\varepsilon$ on $K$. Because $\|\nabla u_\varepsilon\|_2 < \tau$, the gradient constraint is not active at $u_\varepsilon$; consequently, $u_\varepsilon$ is in fact a local minimizer of $J_\varepsilon$ on the manifold $S_\rho$.

Now, we prove that $u_\varepsilon > 0$ in $\Omega$. Since $u_\varepsilon$ is a local minimizer on $S_\rho$, the Lagrange multiplier rule applies. The constraint functional $G(u) = \int_\Omega u^2 \,dx$ has derivative $G'(u_\varepsilon)v = 2\int_\Omega u_\varepsilon v \,dx$, which is a nonzero linear functional on $H_0^1(\Omega)$ because $u_\varepsilon \not\equiv 0$. Hence, there exists $\widetilde{\lambda}_\varepsilon \in \R$ such that $J_\varepsilon'(u_\varepsilon) = \widetilde{\lambda}_\varepsilon G'(u_\varepsilon)$. 
Computing the derivatives, for any $v \in H_0^1(\Omega)$,
\[
\int_\Omega \nabla u_\varepsilon \nabla v \,dx - \int_\Omega (u_\varepsilon+\varepsilon)^{-r} v \,dx - \int_\Omega u_\varepsilon^{p-1} v \,dx = 2\widetilde{\lambda}_\varepsilon \int_\Omega u_\varepsilon v \,dx.
\]
Denoting $\lambda_\varepsilon = 2\widetilde{\lambda}_\varepsilon$, we obtain the weak formulation
\[
\int_\Omega \nabla u_\varepsilon \nabla v \,dx =  \int_\Omega (u_\varepsilon+\varepsilon)^{-r} v \,dx + \int_\Omega u_\varepsilon^{p-1} v \,dx + \lambda_\varepsilon \int_\Omega u_\varepsilon v \,dx.
\]
Thus $u_\varepsilon$ satisfies in the weak sense
\[
-\Delta u_\varepsilon = (u_\varepsilon+\varepsilon)^{-r} + u_\varepsilon^{p-1} + \lambda_\varepsilon u_\varepsilon \quad \text{in } \Omega, \qquad u_\varepsilon = 0 \quad \text{on } \partial\Omega.
\]
Since $\varepsilon > 0$, the right-hand side is strictly positive wherever $u_\varepsilon = 0$. 
By elliptic regularity, because the right-hand side is bounded (as $u_\varepsilon \ge 0$ and $\varepsilon > 0$), we have $u_\varepsilon \in C^{1,\alpha}(\Omega)$ for some $\alpha \in (0,1)$. 
Suppose, by contradiction, that there exists $x_0 \in \Omega$ such that $u_\varepsilon(x_0) = 0$. 
Since $u_\varepsilon \ge 0$, $x_0$ is a local minimum point, so $\nabla u_\varepsilon(x_0) = 0$ and $\Delta u_\varepsilon(x_0) \ge 0$. 
But then the left-hand side satisfies $-\Delta u_\varepsilon(x_0) \le 0$, while the right-hand side at $x_0$ equals $\varepsilon^{-r} > 0$, a contradiction. 
Therefore, $u_\varepsilon$ cannot vanish in $\Omega$, and since $u_\varepsilon \ge 0$ and $u_\varepsilon \not\equiv 0$, we conclude that $u_\varepsilon > 0$ in $\Omega$.
\end{proof}

So far, we have constructed, for each $\varepsilon>0$, a positive solution $u_\varepsilon$ of the regularized problem. In the next section we will derive uniform estimates for these solutions and pass to the limit $\varepsilon\to0^+$.

\section{Uniform estimates and passage to the limit}\label{sec:estimates}

In this section, we pass to the limit $\varepsilon\to0^+$ in the regularized problems to recover a solution to the original singular problem.
Let $\varepsilon_k \to 0^+$ and denote $(\lambda_k,u_k) := (\lambda_{\varepsilon_k},u_{\varepsilon_k})$.
 Each pair satisfies the regularized equation
\begin{equation}\label{eq:approx_problem}
-\Delta u_k + \lambda_k u_k = (u_k+\varepsilon_k)^{-r} + u_k^{p-1} \quad \text{in } \Omega, \qquad u_k = 0 \text{ on } \partial\Omega,
\end{equation}
with $u_k \in S_{\rho}$, $u_k > 0$ in $\Omega$, and $\|\nabla u_k\|_2 < \tau$.  The passage to the limit requires uniform estimates on the sequences $\{\lambda_k\}$ and $\{u_k\}$.

\subsection{Uniform bounds}

We begin with a lower bound for the singular term that is independent of $\varepsilon$.

\begin{lemma}\label{lem:lowerbound}
There exists a constant $\beta > 0$ independent of $\varepsilon$ such that
\[
\int_\Omega u_\varepsilon^{1-r} \,dx \ge \beta, \qquad \forall \varepsilon \in (0,1].
\]
\end{lemma}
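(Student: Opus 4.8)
The plan is to derive the bound using only two uniform facts about the minimizers established in Lemma~\ref{lem:minimizer}, namely the mass constraint $\|u_\varepsilon\|_2^2 = \rho$ and the uniform gradient bound $\|\nabla u_\varepsilon\|_2 < \tau$; in particular, the energy level $c_\varepsilon$ plays no role. The guiding observation is that the three exponents obey $1-r < 2 < 2^*$, so the prescribed $L^2$-mass of $u_\varepsilon$ can be interpolated between its (possibly small) $L^{1-r}$ quantity and its (uniformly bounded) $L^{2^*}$ quantity. Forcing $\int_\Omega u_\varepsilon^{1-r}\,dx$ to be small would then force the mass to be small, contradicting $\|u_\varepsilon\|_2^2 = \rho > 0$.

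First I would record the uniform control of the top exponent: by the Sobolev embedding $H_0^1(\Omega)\hookrightarrow L^{2^*}(\Omega)$ together with $\|\nabla u_\varepsilon\|_2 < \tau$, there is a constant $M>0$, independent of $\varepsilon$, such that $\int_\Omega u_\varepsilon^{2^*}\,dx \le M$. The core of the argument is then a single application of H\"older's inequality to the mass. Since $1-r < 2 < 2^*$, I may choose the conjugate pair
\[
q_1 = \frac{2^*-1+r}{2^*-2}, \qquad q_2 = \frac{2^*-1+r}{1+r},
\]
both of which exceed $1$ (indeed $q_1-1$ and $q_2-1$ have positive numerators $1+r$ and $2^*-2$, respectively, so the two exponents form an admissible H\"older pair). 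Splitting $u_\varepsilon^2 = u_\varepsilon^{(1-r)/q_1}\,u_\varepsilon^{2^*/q_2}$, whose exponents sum to $2$ by the very choice of $q_1,q_2$, H\"older's inequality gives
\[
\rho = \int_\Omega u_\varepsilon^2\,dx \le \left(\int_\Omega u_\varepsilon^{1-r}\,dx\right)^{1/q_1}\left(\int_\Omega u_\varepsilon^{2^*}\,dx\right)^{1/q_2}.
\]
Combining this with $\int_\Omega u_\varepsilon^{2^*}\,dx \le M$ and solving for the singular integral yields
\[
\int_\Omega u_\varepsilon^{1-r}\,dx \ge \left(\frac{\rho}{M^{1/q_2}}\right)^{q_1} =: \beta > 0,
\]
and every constant entering $\beta$ (namely $\rho$, $M$, $q_1$, $q_2$) is independent of $\varepsilon$, which is exactly what is claimed.

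Because the argument is a short chain of elementary inequalities, I do not expect a genuine obstacle. The only points requiring care are purely bookkeeping: verifying that the chosen H\"older exponents are admissible, which reduces to the strict inequalities $1-r < 2 < 2^*$ guaranteed by the hypotheses $0<r<1$ and $N\ge 3$; and confirming that the $L^{2^*}$ bound $M$ is genuinely uniform in $\varepsilon$, which is immediate from the $\varepsilon$-independent gradient bound $\|\nabla u_\varepsilon\|_2 < \tau$ furnished by Lemma~\ref{lem:minimizer}. I would also note in passing that $\int_\Omega u_\varepsilon^{1-r}\,dx$ is finite, since $u_\varepsilon\in L^{2^*}(\Omega)$, $1-r<2^*$, and $\Omega$ is bounded, so the inequalities above are all meaningful.
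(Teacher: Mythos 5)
Your proof is correct, but it takes a genuinely different route from the paper. The paper argues by contradiction and compactness: assuming a sequence $\varepsilon_k \to 0$ with $\int_\Omega u_{\varepsilon_k}^{1-r}\,dx \to 0$, it extracts a weakly convergent subsequence, uses Vitali's convergence theorem (with uniform integrability coming from the $L^{2^*}$ bound) to pass the singular integral to the limit, concludes the weak limit $u$ vanishes a.e., and then contradicts the mass constraint via the compact embedding $H_0^1(\Omega) \hookrightarrow L^2(\Omega)$. You instead give a direct, quantitative argument: the single interpolation inequality
\[
\rho = \int_\Omega u_\varepsilon^2\,dx \le \left(\int_\Omega u_\varepsilon^{1-r}\,dx\right)^{1/q_1}\left(\int_\Omega u_\varepsilon^{2^*}\,dx\right)^{1/q_2},
\]
with your exponents $q_1 = \frac{2^*-1+r}{2^*-2}$, $q_2 = \frac{2^*-1+r}{1+r}$ (which I have checked: they are conjugate, both exceed $1$, and the split exponents $(1-r)/q_1 + 2^*/q_2$ do sum to $2$), combined with the uniform Sobolev bound $\int_\Omega u_\varepsilon^{2^*}\,dx \le M$ from $\|\nabla u_\varepsilon\|_2 < \tau$. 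Your approach buys several things the paper's does not: an explicit value $\beta = \bigl(\rho\, M^{-1/q_2}\bigr)^{q_1}$ rather than a non-constructive constant; no subsequence extraction, no weak convergence, and no Vitali/uniform-integrability machinery; and in fact a stronger conclusion, since your estimate holds for \emph{every} $u \in S_\rho$ with $\|\nabla u\|_2 \le \tau$, not merely for the minimizers $u_\varepsilon$. What the paper's soft argument buys in exchange is freedom from exponent bookkeeping and a template (weak limit plus Vitali) that it reuses elsewhere, e.g.\ in Lemma~\ref{lem:minimizer} and Lemma~\ref{lem:convergence}. Both proofs are valid; yours is arguably the cleaner one for this particular statement.
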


\begin{proof}
We argue by contradiction. If no such $\beta$ exists, then there exists a sequence $\{\varepsilon_k\} \subset (0,1]$ with $\varepsilon_k \to 0$ such that
\[
\lim_{k\to\infty} \int_\Omega u_{\varepsilon_k}^{1-r} \,dx = 0.
\]
Set $u_k := u_{\varepsilon_k}$.
Since the sequence $\{u_k\}$ is bounded in $H_0^1(\Omega)$. Passing to a subsequence (still denoted by $\{u_k\}$), there exists $u \in H_0^1(\Omega)$ such that $u_k \rightharpoonup u$ weakly in $H_0^1(\Omega)$. By the compact embedding $H_0^1(\Omega) \hookrightarrow L^1(\Omega)$, we may also assume $u_k \to u$ strongly in $L^1(\Omega)$ and almost everywhere.

We now show that $\int_\Omega u_k^{1-r} \,dx \to \int_\Omega u^{1-r} \,dx$. To apply Vitali's convergence theorem, we verify uniform integrability of the sequence $\{u_k^{1-r}\}$. Because $\{u_k\}$ is bounded in $H_0^1(\Omega)$, it is also bounded in $L^{2^*}(\Omega)$; let $C>0$ be such that $\|u_k\|_{2^*} \le C$ for all $k$. Using H\"older's inequality with exponents $\frac{2^*}{1-r}$ and $\frac{2^*}{2^*-(1-r)}$, we obtain for any measurable set $E \subset \Omega$,
\[
\int_E u_k^{1-r} \,dx \le |E|^{\frac{2^*-(1-r)}{2^*}} \left( \int_E u_k^{2^*} \,dx \right)^{\frac{1-r}{2^*}} \le |E|^{\frac{2^*-(1-r)}{2^*}} C^{1-r}.
\]
Since $|E|^{\frac{2^*-(1-r)}{2^*}} \to 0$ as $|E| \to 0$, the sequence $\{u_k^{1-r}\}$ is uniformly integrable. Together with the pointwise convergence $u_k^{1-r} \to u^{1-r}$ a.e., Vitali's theorem yields
\[
\lim_{k\to\infty} \int_\Omega u_k^{1-r} \,dx = \int_\Omega u^{1-r} \,dx.
\]

By our assumption, the left-hand side equals $0$, so $\int_\Omega u^{1-r} \,dx = 0$. Since $u^{1-r} \ge 0$, we deduce that $u^{1-r} = 0$ almost everywhere, which means $u = 0$ almost everywhere.

Now, using again the compact embedding $H_0^1(\Omega) \hookrightarrow L^2(\Omega)$, we have (for a further subsequence if necessary) $u_k \to u$ strongly in $L^2(\Omega)$. Because $u = 0$ a.e., it follows that $\|u_k\|_2 \to 0$. However, each $u_k$ belongs to $S_{\rho}$, so $\|u_k\|_2^2 = \rho > 0$ for all $k$. This gives
\[
0 < \rho = \lim_{k\to\infty} \|u_k\|_2^2 = 0,
\]
a contradiction.

Consequently, our initial assumption must be false, and we conclude that there exists  $\beta > 0$ such that
\[
\int_\Omega u_\varepsilon^{1-r} \,dx \ge \beta \quad \text{for all } \varepsilon \in (0,1].
\]
This completes the proof.
\end{proof}


Denote by $\lambda_1 > 0$ the first eigenvalue of $-\Delta$ on $\Omega$ with Dirichlet boundary condition, and let $\varphi_1 \in H_0^1(\Omega)$ be the corresponding positive eigenfunction normalized by $\|\varphi_1\|_{L^2(\Omega)} = 1$. Thus $\varphi_1$ satisfies
\[
-\Delta \varphi_1 = \lambda_1 \varphi_1 \quad \text{in } \Omega, \qquad \varphi_1 = 0 \text{ on } \partial\Omega.
\]

The following lemma provides the necessary uniform estimates for the Lagrange multipliers.

\begin{lemma}\label{lem:boundlambda_k}
Let $\varepsilon_k \to 0^+$ and let $(\lambda_k, u_k)$ be the solutions obtained in Lemma~\ref{lem:minimizer} corresponding to $\varepsilon = \varepsilon_k$. Then there exists a constant $M > 0$ such that
\[
-\lambda_1 < \lambda_k \le M \qquad \text{for all } k.\]
\end{lemma}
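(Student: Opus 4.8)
The plan is to establish the two bounds independently, each by inserting a well-chosen test function into the weak formulation of \eqref{eq:approx_problem} (equivalently, the weak formulation derived at the end of the proof of Lemma~\ref{lem:minimizer}).

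For the upper bound $\lambda_k \le M$, I would test against $u_k$ itself. Since $u_k \in S_\rho$ gives $\int_\Omega u_k^2\,dx = \rho$, the weak formulation rearranges to
\[
\lambda_k \rho = -\|\nabla u_k\|_2^2 + \int_\Omega (u_k+\varepsilon_k)^{-r} u_k\,dx + \int_\Omega u_k^p\,dx.
\]
The gradient term is nonpositive and can be discarded. For the singular term, the pointwise inequality $u_k \le u_k+\varepsilon_k$ yields $(u_k+\varepsilon_k)^{-r}u_k \le (u_k+\varepsilon_k)^{1-r}$, so its integral is controlled by the constant $C_1$ already produced in the proof of Lemma~\ref{lem:geometry}, uniformly for $\varepsilon_k \in (0,1]$. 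For the superlinear term, the Gagliardo–Nirenberg estimate of that same lemma together with the uniform bound $\|\nabla u_k\|_2 < \tau$ gives $\int_\Omega u_k^p\,dx \le C_2 \tau^{ap}$. Combining these produces $\lambda_k \le (C_1 + C_2\tau^{ap})/\rho =: M$, with $M$ independent of $k$.

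For the lower bound $\lambda_k > -\lambda_1$, I would test against the first eigenfunction $\varphi_1$. Integration by parts and the eigenvalue equation $-\Delta\varphi_1 = \lambda_1\varphi_1$ give $\int_\Omega \nabla u_k \cdot \nabla \varphi_1\,dx = \lambda_1 \int_\Omega u_k\varphi_1\,dx$, so the weak formulation becomes
\[
(\lambda_1 + \lambda_k)\int_\Omega u_k\varphi_1\,dx = \int_\Omega (u_k+\varepsilon_k)^{-r}\varphi_1\,dx + \int_\Omega u_k^{p-1}\varphi_1\,dx.
\]
Every integral on the right is finite (the singular factor is bounded by $\varepsilon_k^{-r}$ for fixed $\varepsilon_k>0$) and \emph{strictly} positive, because $u_k>0$ and $\varphi_1>0$ in $\Omega$; likewise the coefficient $\int_\Omega u_k\varphi_1\,dx$ is strictly positive. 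Dividing, the factor $\lambda_1+\lambda_k$ is forced to be strictly positive, which is exactly $\lambda_k > -\lambda_1$.

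Neither bound presents a serious obstacle; the points requiring care are bookkeeping. In the upper bound, one must verify that $C_1$, $C_2$ and $\tau$ are precisely the $\varepsilon$- and $k$-independent constants supplied by Lemma~\ref{lem:geometry}, and confirm the pointwise inequality $(u_k+\varepsilon_k)^{-r}u_k \le (u_k+\varepsilon_k)^{1-r}$. In the lower bound, the delicate step is legitimizing $\varphi_1$ as a test function and the integration by parts, both valid since $\varphi_1 \in H_0^1(\Omega)$ and the singular term is integrable for fixed $\varepsilon_k>0$. The \emph{strictness} of $\lambda_k > -\lambda_1$ is the one feature deserving emphasis: it rests entirely on the strict positivity $u_k>0$ established in Lemma~\ref{lem:minimizer}, which makes the right-hand side above strictly positive rather than merely nonnegative.
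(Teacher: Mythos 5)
Your proposal is correct and matches the paper's own proof essentially step for step: the upper bound comes from testing with $u_k$, discarding the gradient term, and bounding the singular term via $(u_k+\varepsilon_k)^{-r}u_k \le (u_k+\varepsilon_k)^{1-r}$ together with the Gagliardo--Nirenberg bound on $\int_\Omega u_k^p\,dx$; the lower bound comes from testing with $\varphi_1$ and using the strict positivity of $u_k$ and $\varphi_1$. The only cosmetic difference is that you reuse the constants $C_1, C_2$ from Lemma~\ref{lem:geometry}, whereas the paper rederives equivalent $k$-independent bounds in place.
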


\begin{proof}
We establish the upper and lower bounds separately.

\textbf{Lower bound $\lambda_k > -\lambda_1$.}
Each pair $(\lambda_k, u_k)$ satisfies the weak formulation
\begin{equation}\label{eq:weak_form}
\int_\Omega \nabla u_k  \nabla \psi \,dx + \lambda_k \int_\Omega u_k \psi \,dx
= \int_\Omega (u_k+\varepsilon_k)^{-r} \psi \,dx + \int_\Omega u_k^{p-1} \psi \,dx
\end{equation}
for all $\psi \in H_0^1(\Omega)$. Choose $\psi = \varphi_1$, the first eigenfunction of $-\Delta$ on $H_0^1(\Omega)$, normalized with $\varphi_1 > 0$ and $\|\varphi_1\|_2 = 1$. Using the identity
\[
\int_\Omega \nabla u_k  \nabla \varphi_1 \,dx = \lambda_1 \int_\Omega u_k \varphi_1 \,dx,
\]
then \eqref{eq:weak_form} becomes
\[
(\lambda_1 + \lambda_k) \int_\Omega u_k \varphi_1 \,dx
= \int_\Omega (u_k+\varepsilon_k)^{-r} \varphi_1 \,dx + \int_\Omega u_k^{p-1} \varphi_1 \,dx.
\]
Since $u_k > 0$, $\varphi_1 > 0$, and $\varepsilon_k > 0$, the right-hand side is strictly positive. Moreover, $\int_\Omega u_k \varphi_1 \,dx > 0$. Consequently,
$\lambda_1 + \lambda_k > 0$ implies that $\lambda_k > -\lambda_1$.

\textbf{Upper bound $\lambda_k \le M$.} 
Testing \eqref{eq:weak_form} with $\psi = u_k$ yields
\[
\int_\Omega |\nabla u_k|^2 \,dx + \lambda_k \int_\Omega u_k^2 \,dx
= \int_\Omega (u_k+\varepsilon_k)^{-r} u_k \,dx + \int_\Omega u_k^p \,dx.
\]
Since $\|u_k\|_2^2 = \rho$ and $\int_\Omega |\nabla u_k|^2 \,dx \ge 0$, we obtain
\begin{equation}\label{eq:lambda_estimate}
\lambda_k \rho \le \int_\Omega (u_k+\varepsilon_k)^{-r} u_k \,dx + \int_\Omega u_k^p \,dx.
\end{equation}

We now estimate the two integrals on the right-hand side by constants independent of $k$.

\emph{Power term.} By the Gagliardo-Nirenberg inequality, there exists a constant $C = C(N,p,\Omega) > 0$ such that
\[
\|u_k\|_p \le C \|\nabla u_k\|_2^a \|u_k\|_2^{1-a} \le C \tau^a \rho^{(1-a)/2},
\]
where $a = N\left(\frac12 - \frac1p\right) \in (0,1)$. Hence,
\begin{equation}\label{eq:power_bound}
\int_\Omega u_k^p \,dx \le C^p \tau^{ap} \rho^{(1-a)p/2} =: C_p.
\end{equation}

\emph{Singular term.} Observe that
\[
0 \le (u_k+\varepsilon_k)^{-r} u_k \le (u_k+\varepsilon_k)^{1-r}.
\]
Using the elementary inequality $(a+b)^{1-r} \le 2^{1-r}(a^{1-r}+b^{1-r})$ for $a,b \ge 0$, we obtain
\[
\int_\Omega (u_k+\varepsilon_k)^{1-r} \,dx \le 2^{1-r} \int_\Omega u_k^{1-r} \,dx + 2^{1-r} \varepsilon_k^{1-r} |\Omega|.
\]
As $\varepsilon_k \to 0$, the term $\varepsilon_k^{1-r} |\Omega|$ tends to zero, but for a uniform bound we simply note that $\varepsilon_k^{1-r} \le 1$ for $\varepsilon_k \le 1$ (which holds for large $k$). Thus,
\[
\int_\Omega (u_k+\varepsilon_k)^{1-r} \,dx \le 2^{1-r} \int_\Omega u_k^{1-r} \,dx + 2^{1-r} |\Omega|.
\]
To bound $\int_\Omega u_k^{1-r} \,dx$, apply H\"older's inequality with exponents $\frac{2}{1-r}$ and $\frac{2}{1+r}$ (note that $0<r<1$):
\[
\int_\Omega u_k^{1-r} \,dx \le |\Omega|^{\frac{1+r}{2}} \left( \int_\Omega u_k^2 \,dx \right)^{\frac{1-r}{2}} = |\Omega|^{\frac{1+r}{2}} \rho^{\frac{1-r}{2}}.
\]
Therefore,
\begin{equation}\label{eq:singular_bound}
 \int_\Omega (u_k+\varepsilon_k)^{-r} u_k \,dx \le  \int_\Omega (u_k+\varepsilon_k)^{1-r} \,dx
\le \left( 2^{1-r} |\Omega| + 2^{1-r} |\Omega|^{\frac{1+r}{2}} \rho^{\frac{1-r}{2}} \right) =: C_s.
\end{equation}
Inserting the bounds \eqref{eq:power_bound} and \eqref{eq:singular_bound} into \eqref{eq:lambda_estimate} gives
\[
\lambda_k \rho \le C_s + C_p.
\]
Since $\rho > 0$, we may set $M := (C_s + C_p)/\rho$, which yields $\lambda_k \le M$ for all $k$.

Combining the two parts, we conclude that $-\lambda_1 < \lambda_k \le M$ for every $k$.
\end{proof}


To control the singular term $u^{-r}$ in the limit, we also need a uniform $L^\infty$ estimate for $\{u_k\}$. This is achieved by a blow-up argument.

\begin{lemma}\label{lem:uniform_bound}
Let $\varepsilon_k \to 0^+$ and let $(\lambda_k, u_k)$ be the solutions obtained in Lemma~\ref{lem:minimizer} corresponding to $\varepsilon = \varepsilon_k$. Then there exists a constant $C>0$ such that $\|u_k\|_{L^\infty(\Omega)} \le C$ for all $k$.
\end{lemma}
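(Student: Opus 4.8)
The plan is to argue by contradiction through a blow-up (rescaling) analysis centered at the maximum point of $u_k$, reducing the matter to a Liouville-type nonexistence theorem for the limiting equation $-\Delta v = v^{p-1}$. Suppose the conclusion fails; then along a subsequence $M_k := \|u_k\|_{L^\infty(\Omega)} \to \infty$. Since each $u_k$ is continuous on $\overline\Omega$ with $u_k = 0$ on $\partial\Omega$ and $u_k > 0$ in $\Omega$, the maximum is attained at an interior point $x_k \in \Omega$, i.e. $u_k(x_k) = M_k$. Guided by the scaling of the leading nonlinearity, I would set $\mu_k = M_k^{-(p-2)/2} \to 0$ and define the rescaled profiles $v_k(y) = M_k^{-1} u_k(x_k + \mu_k y)$ on the expanding domains $\Omega_k = \mu_k^{-1}(\Omega - x_k)$, so that $0 \le v_k \le 1$ and $v_k(0) = 1$. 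A direct computation from \eqref{eq:approx_problem} shows
\[
-\Delta v_k = v_k^{p-1} + h_k - \lambda_k \mu_k^2 v_k \quad \text{in } \Omega_k,
\]
where $h_k(y) = M_k^{-(p-1)}(M_k v_k + \varepsilon_k)^{-r}$ is the rescaled singular term.

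Next I would show that the two lower-order terms are negligible in the limit. The linear term is immediate: by Lemma~\ref{lem:boundlambda_k} the multipliers satisfy $|\lambda_k| \le \max\{\lambda_1, M\}$, so $\lambda_k \mu_k^2 v_k \to 0$ uniformly because $\mu_k \to 0$ and $0 \le v_k \le 1$. The heart of the matter is $h_k$. Wherever $v_k$ is bounded below, say $v_k \ge \delta$, one has $(M_k v_k + \varepsilon_k)^{-r} \le (\delta M_k)^{-r}$, whence $h_k \le \delta^{-r} M_k^{-(p-1)-r} \to 0$; thus the singular term is negligible exactly on the region carrying the blow-up profile, where $u_k$ is comparable to its maximum $M_k$. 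Making this rigorous amounts to proving that the profiles $v_k$ do not degenerate to $0$ on compact sets, i.e. a uniform local lower bound $v_k \ge c_0(R) > 0$ on each fixed ball $B_R$. I would obtain this from the nonnegativity of the source $v_k^{p-1} + h_k \ge 0$ together with $v_k(0) = 1$, via a Harnack/De Giorgi--Nash--Moser argument (or a doubling argument to select the blow-up points so that $u_k$ is comparable to its local maximum on the scale $\mu_k$). Combined with the trivial bound $v_k \le 1$, this yields $h_k \to 0$ in $L^q_{\mathrm{loc}}$ for some $q > N/2$ and a uniform bound on the right-hand side in $L^q_{\mathrm{loc}}$.

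With the right-hand side uniformly bounded in $L^q_{\mathrm{loc}}$, $q > N/2$, interior $W^{2,q}$ estimates and Sobolev embedding furnish uniform $C^{1,\gamma}_{\mathrm{loc}}$ bounds, so up to a subsequence $v_k \to v$ in $C^1_{\mathrm{loc}}$. I would then split according to the behaviour of $d_k := \mathrm{dist}(x_k, \partial\Omega)$ relative to $\mu_k$. If $d_k/\mu_k \to \infty$, the domains $\Omega_k$ exhaust $\R^N$ and $v$ is a bounded nonnegative solution of $-\Delta v = v^{p-1}$ in $\R^N$ with $v(0) = 1$. If $d_k/\mu_k$ remains bounded, after translating and rotating one obtains a half-space and $v$ solves $-\Delta v = v^{p-1}$ in $\R^N_+$ with $v = 0$ on $\partial\R^N_+$, where the smoothness of $\Omega$ provides the uniform boundary regularity needed to justify the flattening and the boundary condition in the limit. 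In either case, since $1 < p-1 < \frac{N+2}{N-2}$, the Gidas--Spruck Liouville theorems (on $\R^N$ and on the half-space with Dirichlet condition) force $v \equiv 0$, contradicting $v(0) = 1$. This contradiction establishes the uniform $L^\infty$ bound.

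The principal obstacle is the singular term. In contrast to the purely power-type case, $h_k$ is not automatically controlled in $L^q_{\mathrm{loc}}$, since $(u_k + \varepsilon_k)^{-r}$ can be as large as $\varepsilon_k^{-r}$ wherever $u_k$ is small, with no a priori relation between $\varepsilon_k$ and $M_k$. The entire argument therefore hinges on the \emph{nondegeneracy} of the rescaled profiles, namely that $u_k$ stays comparable to $M_k$ on the blow-up scale $\mu_k$ so that the singular term remains negligible there; this is precisely what confines the genuinely singular behaviour of $(u_k+\varepsilon_k)^{-r}$ to the boundary layer near $\partial\Omega$, away from the interior maximum. The secondary technical point is the boundary alternative of the dichotomy, which requires the half-space Liouville theorem together with uniform estimates up to $\partial\Omega$.
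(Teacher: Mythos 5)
Your proposal follows the same blow-up scheme as the paper's proof: the identical rescaling $\mu_k=M_k^{-(p-2)/2}$, $v_k(y)=M_k^{-1}u_k(x_k+\mu_k y)$ about a maximum point, the same attempt to neutralize the rescaled singular term through a uniform positive lower bound for $v_k$ on compact sets, and the same interior/boundary dichotomy. The one genuinely different step is the endgame. You invoke the Gidas--Spruck Liouville theorem on $\R^N$ (legitimate here, since $1<p-1<\frac{N+2}{N-2}$ is equivalent to $2<p<2^*$), whereas the paper derives an energy contradiction: scaling back, $\int_{B(x_k,R\mu_k)}u_k^p\,dx=M_k^{\,p-N(p-2)/2}\int_{B_R}v_k^p\,dy\to\infty$ because $p>2+\frac4N$, while the Gagliardo--Nirenberg inequality together with $\|\nabla u_k\|_2\le\tau$ and $\|u_k\|_2^2=\rho$ keeps $\|u_k\|_p^p$ bounded. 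Your route does not use the mass-supercritical restriction at this stage and treats in one stroke the interior case and the boundary case with $d_k/\mu_k\to\infty$ (a subcase the paper only disposes of with a vague remark); the paper's route is more self-contained in that it avoids the whole-space Liouville theorem and exploits the constraint structure instead, but both arguments still require the half-space Liouville theorem for the remaining boundary subcase. Either endgame is correct in the stated range of $p$.

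You should be aware, however, that the nondegeneracy step --- which you rightly single out as the principal obstacle --- is not justified by what you propose, and the paper suffers from exactly the same defect. The two-sided Harnack inequality $\sup_{B_R}v_k\le C\inf_{B_R}v_k$ holds for nonnegative \emph{solutions} of equations whose right-hand side is controlled (say in $L^q$, $q>N/2$); it is false for mere nonnegative supersolutions, which is all that the inequality $-\Delta v_k+v_k\ge0$ provides. For instance, $w_j:=\min\{j^{-1}G,1\}$, with $G$ the fundamental solution of $-\Delta+1$, satisfies $w_j\ge0$, $-\Delta w_j+w_j\ge0$, $w_j(0)=1$, yet $\inf_{B_R}w_j\to0$ as $j\to\infty$; so the point value $v_k(0)=1$ plus the supersolution property cannot yield a uniform lower bound on compact sets. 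The argument is in fact circular: one needs the lower bound on $v_k$ to control $h_k$, and one needs $h_k$ controlled in $L^q_{\mathrm{loc}}$, $q>N/2$, before Harnack (or De Giorgi--Nash--Moser) applies; the doubling-lemma alternative you mention does not help, as it produces upper rather than lower bounds on the profile. A clean repair is to use the singular term \emph{globally} instead of locally: since $u_k\le M_k$ gives $(u_k+\varepsilon_k)^{-r}\ge(M_k+\varepsilon_k)^{-r}$ and $\lambda_k\le M$, comparison with the solution of $-\Delta\phi+M\phi=1$, $\phi=0$ on $\partial\Omega$, yields $u_k\ge(M_k+\varepsilon_k)^{-r}\phi$ in $\Omega$; hence, in the interior case, $v_k\ge c\,M_k^{-1-r}$ on compact sets, so $h_k\le C\,M_k^{-(p-1)+r^2}\to0$ uniformly there (as $r^2<1<p-1$). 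With that, the rescaled right-hand side is uniformly bounded, your elliptic estimates and $C^1_{\mathrm{loc}}$ convergence are legitimate, and either endgame goes through with no Harnack step at all.
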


\begin{proof}
We argue by contradiction. Suppose that $\|u_k\|_{L^\infty(\Omega)} \to \infty$ along a subsequence (still denoted by $\{u_k\}$).
Set $M_k := \|u_k\|_{L^\infty(\Omega)}$. Following the regularity, we can choose $x_k \in \Omega$ such that $u_k(x_k) = M_k$.
Let $d_k := \operatorname{dist}(x_k, \partial\Omega)$.
By passing to a further subsequence, we may assume either
\begin{enumerate}
    \item $d_k \to d > 0$ (interior case), or
    \item $d_k \to 0$ (boundary case).
\end{enumerate}
We treat the interior case in detail; the boundary case is similar and will be sketched afterwards.

\noindent\textbf{Case 1: Interior case $d_k \to d > 0$.}

Without loss of generality, we assume $x_k \to x_0 \in \Omega$ and that there exists $R_0 > 0$ such that $\overline{B_{2R_0}(x_0)} \subset \Omega$.
For sufficiently large $k$, we have $x_k \in B_{R_0}(x_0)$ and $B_{R_0}(x_k) \subset B_{2R_0}(x_0) \subset \Omega$.

\noindent\textbf{Step 1: Scaling.}
Define
\[
\delta_k := M_k^{-\frac{p-2}{2}}, \qquad
v_k(y) := \frac{1}{M_k} u_k\bigl(x_k + \delta_k y\bigr), \quad y \in \Omega_k := \bigl\{ y\in\mathbb{R}^N: x_k + \delta_k y \in \Omega \bigr\}.
\]
Then $0 \le v_k \le 1$, $v_k(0)=1$, and a direct calculation shows that $v_k$ satisfies the rescaled equation
\begin{equation}\label{eq:scaled_eq}
-\Delta v_k = M_k^{-(p-1+r)} \bigl(v_k + \varepsilon_k M_k^{-1}\bigr)^{-r} + v_k^{\,p-1} - \lambda_k M_k^{-(p-2)} v_k \quad \text{in } \Omega_k.
\end{equation}
Indeed, recall that $u_k$ satisfies
\[
-\Delta u_k + \lambda_k u_k = (u_k+\varepsilon_k)^{-r} + u_k^{\,p-1}.
\]
Substituting $u_k(x) = M_k v_k\bigl((x-x_k)/\delta_k\bigr)$ and using the chain rule, we obtain
\[
-\frac{1}{\delta_k^2} \Delta v_k + \lambda_k M_k v_k = \bigl(M_k v_k + \varepsilon_k\bigr)^{-r} + M_k^{p-1} v_k^{\,p-1}.
\]
Multiplying by $\delta_k^2 = M_k^{-(p-2)}$ and rearranging yields \eqref{eq:scaled_eq}.

\noindent\textbf{Step 2: Uniform positivity on compact sets.}
Since the first two terms on the right-hand side of \eqref{eq:scaled_eq} are nonnegative, we have
\[
-\Delta v_k \ge -\lambda_k M_k^{-(p-2)} v_k.
\]
By Lemma~\ref{lem:boundlambda_k}, the sequence $\{\lambda_k\}$ is bounded; hence there exists a constant $\Lambda > 0$ such that $|\lambda_k| \le \Lambda$ for all $k$.
Because $M_k \to \infty$, we have $M_k^{-(p-2)} \to 0$, so there exists $k_0$ such that for all $k \ge k_0$, $|\lambda_k| M_k^{-(p-2)} \le 1$.
Consequently,
\[
-\Delta v_k \ge - v_k \qquad \text{in } \Omega_k \text{ for } k \ge k_0.
\]
Fix any $R > 0$. For $k$ large enough, we have $B_{2R}(0) \subset \Omega_k$.
Apply the Harnack inequality to the inequality $-\Delta v_k + v_k \ge 0$ in $B_{2R}(0)$.
There exists a constant $C_H = C_H(N,R) > 0$, independent of $k$, such that
\[
\sup_{B_R(0)} v_k \le C_H \inf_{B_R(0)} v_k.
\]
Since $v_k(0)=1$, it follows that $\inf_{B_R(0)} v_k \ge C_H^{-1}$.
Therefore, for every compact set $W \subset \mathbb{R}^N$, there exists a constant $c_W > 0$ such that
$$
v_k(y) \ge c_W \qquad \text{for all } y \in W \text{ and all sufficiently large } k.
$$

\noindent\textbf{Step 3: Convergence to a limit.}
The uniform lower bound on compact sets implies that the singular term in \eqref{eq:scaled_eq} satisfies, for $y \in W$,
\[
0 \le M_k^{-(p-1+r)} \bigl(v_k(y) + \varepsilon_k M_k^{-1}\bigr)^{-r} \le M_k^{-(p-1+r)} c_W^{-r} \to 0.
\]
Moreover, $|\lambda_k| M_k^{-(p-2)} \to 0$.
Hence, the right-hand side of \eqref{eq:scaled_eq} converges locally uniformly to $v^{p-1}$.
Standard elliptic $L^p$ estimates together with the Sobolev embedding yield that $\{v_k\}$ is bounded in $C^{1,\alpha}_{\text{loc}}(\mathbb{R}^N)$ for some $\alpha \in (0,1)$.
After extracting a subsequence, we have $v_k \to v$ in $C^1_{\text{loc}}(\mathbb{R}^N)$, and the limit function $v$ satisfies
\begin{equation}\label{eq:limit_eq}
-\Delta v = v^{\,p-1}, \quad v \ge 0, \quad v(0)=1.
\end{equation}
By the strong maximum principle, $v > 0$ in $\mathbb{R}^N$.

\noindent\textbf{Step 4: Energy contradiction.}
For any fixed $R > 0$, the convergence $v_k \to v$ in $C^1(\overline{B_R(0)})$ implies
\[
\int_{B_R(0)} v_k^p \,dy \to \int_{B_R(0)} v^p \,dy > 0.
\]
Rescaling back to $u_k$, we obtain
\[
\int_{B(x_k, R\delta_k)} u_k^p \,dx = M_k^p \delta_k^N \int_{B_R(0)} v_k^p \,dy = M_k^{p - \frac{N(p-2)}{2}} \int_{B_R(0)} v_k^p \,dy.
\]
Since we assume $p > 2 + \frac{4}{N}$ (which is exactly the condition $p - \frac{N(p-2)}{2} > 0$), we have $M_k^{p - \frac{N(p-2)}{2}} \to \infty$.
Consequently,
\[
\int_\Omega u_k^p \,dx \ge \int_{B(x_k, R\delta_k)} u_k^p \,dx \to \infty.
\]
On the other hand, by the Gagliardo-Nirenberg inequality and the facts that $\|\nabla u_k\|_2 \le \tau$ and $\|u_k\|_2 = \sqrt{\rho}$, there exists a constant $C_0 = C_0(N,p,\Omega) > 0$ such that
\[
\|u_k\|_p \le C_0 \|\nabla u_k\|_2^{a} \|u_k\|_2^{1-a} \le C_0 \tau^{a} \rho^{\frac{1-a}{2}}, \qquad a = N\left( \frac12 - \frac1p \right).
\]
Thus, $\|u_k\|_p^p$ is uniformly bounded, contradicting the above blow-up.
Hence, the interior case cannot occur.

\noindent\textbf{Case 2: Boundary case $d_k \to 0$.}

In this case, we may assume (after a translation and rotation) that $x_k \to 0 \in \partial\Omega$ and that near $0$, the boundary $\partial\Omega$ is flat, say $\Omega \cap B_{2r}(0) = \{ x \in B_{2r}(0) : x_N > 0 \}$ for some $r>0$.
For simplicity, we assume $\Omega$ is a half-ball near $0$; the general case can be reduced to this by a local flattening of the boundary, which does not affect the blow-up argument.

Define $M_k$ and $\delta_k$ as before.
Now choose $y_k \in \partial\Omega$ such that $|x_k - y_k| = d_k$, and set $\tilde{x}_k := y_k$ (the projection of $x_k$ onto $\partial\Omega$).
After translating $\tilde{x}_k$ to the origin and rotating, we may assume that in the new coordinates, $\Omega \supset \{ x : x_N > 0 \}$ and $x_k = (0,\dots,0,d_k)$ with $d_k \to 0$.

We scale as before, but now we define
\[
v_k(y) := \frac{1}{M_k} u_k\bigl( \delta_k y', \delta_k y_N + d_k \bigr), \qquad y = (y', y_N) \in \Omega_k,
\]
where $\Omega_k := \{ y : (\delta_k y', \delta_k y_N + d_k) \in \Omega \}$.
For large $k$, $\Omega_k$ contains the set $\{ y : y_N > -d_k/\delta_k \}$.
Notice that $d_k/\delta_k$ may tend to $0$, $\infty$, or a finite positive number.
However, by a standard argument, 
one can show that, after passing to a subsequence, $d_k/\delta_k \to \infty$ leads to a contradiction because the limiting problem would be on the whole space, but the boundary condition is lost.
If $d_k/\delta_k \to c \ge 0$, then the limiting domain is the half-space $\{ y_N > -c \}$, and the limit function $v$ satisfies
\[
-\Delta v = v^{\,p-1} \ \text{in } \{ y_N > -c \}, \qquad v = 0 \ \text{on } \{ y_N = -c \},
\]
with $v(0)=1$ (since $x_k = (0,d_k)$ corresponds to $y^{(k)} = (0, d_k/\delta_k) \to (0,c)$).
By the maximum principle, $v > 0$ in the interior.
However, for $p < 2^*$, there is no positive solution of $-\Delta v = v^{p-1}$ in a half-space with Dirichlet boundary condition.
Thus, we again obtain a contradiction.

Since both cases lead to contradictions, our initial assumption that $\|u_k\|_{L^\infty(\Omega)} \to \infty$ is false.
Therefore, there exists a constant $C > 0$ such that $\|u_k\|_{L^\infty(\Omega)} \le C$ for all $k$.
\end{proof}

\subsection{Convergence to a solution of the original problem}

With the uniform bounds established in Lemmas~\ref{lem:boundlambda_k} and \ref{lem:uniform_bound}, we can now pass to the limit $\varepsilon_k \to 0$ and obtain a solution of the original problem \eqref{eq:main}. The next lemma extracts a convergent subsequence.

\begin{lemma}\label{lem:convergence}
Under the assumptions of Lemma~\ref{lem:boundlambda_k}, there exist a subsequence (still denoted by $\{(\lambda_k, u_k)\}$), a function $u \in H_0^1(\Omega) \cap L^\infty(\Omega)$, and a number $\lambda \in \R$ such that the following hold:
\begin{enumerate}
    \item[(1)] $u_k \rightharpoonup u$ weakly in $H_0^1(\Omega)$,
    \item[(2)] $u_k \to u$ strongly in $L^q(\Omega)$ for every $q \in [1,\infty)$,
    \item[(3)] $\lambda_k \to \lambda$,
    \item[(4)] $u \in S_{\rho}$, i.e., $u \ge 0$ and $\displaystyle\int_\Omega u^2\,dx = \rho$,
    \item[(5)] $\displaystyle\int_\Omega u^{1-r}\,dx \ge \beta > 0$.
\end{enumerate}
\end{lemma}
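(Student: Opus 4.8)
The plan is to extract a single subsequence realizing all five conclusions simultaneously, by combining the uniform bounds already established in Lemmas~\ref{lem:lowerbound}, \ref{lem:boundlambda_k} and~\ref{lem:uniform_bound} with standard weak/strong compactness. All the hard analytic work has been done upstream; what remains is to assemble it and pass to the limit, with care taken that one relabeled subsequence serves every claim.

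First I would use the uniform $H_0^1$ bound: since $\|\nabla u_k\|_2 < \tau$ for all $k$, the sequence $\{u_k\}$ is bounded in $H_0^1(\Omega)$, so by reflexivity a subsequence converges weakly to some $u \in H_0^1(\Omega)$, which is (1). The compact embedding $H_0^1(\Omega) \hookrightarrow L^q(\Omega)$ for $q \in [1,2^*)$ then gives strong convergence $u_k \to u$ in $L^q$ for such $q$, and after a further extraction the pointwise a.e.\ convergence $u_k \to u$. Because $-\lambda_1 < \lambda_k \le M$ by Lemma~\ref{lem:boundlambda_k}, the scalars $\{\lambda_k\}$ are bounded in $\R$, so a further subsequence yields $\lambda_k \to \lambda$ with $\lambda \in [-\lambda_1, M]$, giving (3). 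From here on I work with this single relabeled subsequence.

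The substantive step is upgrading the strong convergence in (2) from $q < 2^*$ to every $q \in [1,\infty)$, and this is exactly where the uniform $L^\infty$ bound of Lemma~\ref{lem:uniform_bound} is used. From $\|u_k\|_\infty \le C$ together with the a.e.\ convergence I first deduce $\|u\|_\infty \le C$, so $u \in L^\infty(\Omega)$. Then for any $q \ge 1$ an interpolation estimate gives
\[
\|u_k - u\|_q^q \le \|u_k - u\|_\infty^{q-1}\, \|u_k - u\|_1 \le (2C)^{q-1}\, \|u_k - u\|_1,
\]
and since $u_k \to u$ in $L^1(\Omega)$ is already known, the right-hand side tends to $0$; this proves (2).

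Finally I would pass to the limit to obtain (4) and (5). Strong $L^2$-convergence gives $\int_\Omega u^2\,dx = \lim_k \int_\Omega u_k^2\,dx = \rho$, while $u_k \ge 0$ forces $u \ge 0$ a.e., so $u \in S_\rho$, which is (4). For (5), the uniform bound $0 \le u_k^{1-r} \le C^{1-r}$ on the bounded domain $\Omega$ furnishes an integrable dominating function, and $t \mapsto t^{1-r}$ is continuous on $[0,C]$, so the dominated convergence theorem yields $\int_\Omega u_k^{1-r}\,dx \to \int_\Omega u^{1-r}\,dx$; combined with the uniform lower bound $\int_\Omega u_k^{1-r}\,dx \ge \beta$ from Lemma~\ref{lem:lowerbound}, this gives $\int_\Omega u^{1-r}\,dx \ge \beta$. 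The only real obstacle here is organizational rather than analytic, namely keeping track of the nested subsequence extractions so that a single subsequence delivers (1)--(5) at once; the genuine difficulties were already resolved in the uniform $L^\infty$ and Lagrange-multiplier estimates.
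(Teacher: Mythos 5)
Your proposal is correct and follows essentially the same route as the paper: assemble the uniform $H_0^1$, $L^\infty$ and Lagrange-multiplier bounds, extract one relabeled subsequence, use the compact embedding plus interpolation against the $L^\infty$ bound to upgrade (2) to all $q\in[1,\infty)$, and pass to the limit for (4)--(5). The only (immaterial) difference is in (5), where you invoke dominated convergence with the constant dominant $C^{1-r}$, whereas the paper bounds $\int_\Omega |u_k-u|^{1-r}\,dx$ via H\"older's inequality and $L^1$-convergence; both are valid one-line arguments.
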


\begin{proof}
We prove each statement in order.

\textbf{(1) Weak convergence in $H_0^1(\Omega)$.}
Since $u_k \in K$, we have $\|\nabla u_k\|_2 \le \tau$ and $\|u_k\|_2 = \sqrt{\rho}$. Consequently,
\[
\|u_k\|_{H_0^1}^2 = \|\nabla u_k\|_2^2 + \|u_k\|_2^2 \le \tau^2 + \rho,
\]
so $\{u_k\}$ is bounded in $H_0^1(\Omega)$. By the reflexivity of $H_0^1(\Omega)$, there exists a subsequence (still denoted by $\{u_k\}$) and a function $u \in H_0^1(\Omega)$ such that
\[
u_k \rightharpoonup u \quad \text{weakly in } H_0^1(\Omega).
\]

\textbf{(2) Strong convergence in $L^q(\Omega)$ for all $q \in [1,\infty)$.}
By the Rellich--Kondrachov compactness theorem, the embedding $H_0^1(\Omega) \hookrightarrow L^q(\Omega)$ is compact for every $q \in [1, 2^*)$. Hence, from the weak convergence $u_k \rightharpoonup u$ in $H_0^1(\Omega)$, we obtain strong convergence in $L^q(\Omega)$ for every $q \in [1, 2^*)$. In particular, $u_k \to u$ in $L^2(\Omega)$.

From Lemma~\ref{lem:uniform_bound}, there exists a constant $C_\infty > 0$ such that $\|u_k\|_{L^\infty(\Omega)} \le C_\infty$ for all $k$. Since $u_k \to u$ in $L^2(\Omega)$, we deduce that $u \in L^\infty(\Omega)$ with $\|u\|_{L^\infty} \le C_\infty$ (indeed, any $L^2$-limit of a sequence uniformly bounded in $L^\infty$ must belong to $L^\infty$ with the same bound).

Now fix an arbitrary $q \in [1, \infty)$. Choose $q_0 \in [1, 2^*)$ such that $q_0 \le q$. By the interpolation inequality, we have
\[
\|u_k - u\|_{L^q} \le \|u_k - u\|_{L^{q_0}}^{\theta} \|u_k - u\|_{L^\infty}^{1-\theta},
\]
where $\theta = q_0/q \in (0,1]$. Since $\|u_k - u\|_{L^\infty} \le 2C_\infty$ and $\|u_k - u\|_{L^{q_0}} \to 0$ as $k \to \infty$, it follows that $\|u_k - u\|_{L^q} \to 0$. Thus $u_k \to u$ strongly in $L^q(\Omega)$ for every $q \in [1,\infty)$.

\textbf{(3) Convergence of $\lambda_k$.}
By Lemma~\ref{lem:boundlambda_k}, the sequence $\{\lambda_k\}$ is bounded in $\R$. Therefore, extracting a further subsequence if necessary, we have $\lambda_k \to \lambda$ for some $\lambda \in \R$.

\textbf{(4) $u \in S_{\rho}$.}
Because $u_k \to u$ strongly in $L^2(\Omega)$ and $\int_\Omega u_k^2 \,dx = \rho$ for all $k$, we obtain
\[
\int_\Omega u^2 \,dx = \lim_{k \to \infty} \int_\Omega u_k^2 \,dx = \rho.
\]
Moreover, since $u_k \ge 0$ almost everywhere, the pointwise limit (which exists after passing to a subsequence, and the limit is identified with $u$ due to $L^2$-convergence) satisfies $u \ge 0$ almost everywhere. Hence, $u \in S_{\rho}$.

\textbf{(5) Lower bound for the singular term.}
From Lemma~\ref{lem:lowerbound}, we have $\int_\Omega u_k^{1-r} \,dx \ge \beta > 0$ for all $k$. Note that $0 < 1-r < 1$ because $0<r<1$. We claim that $u_k \to u$ in $L^{1-r}(\Omega)$. Indeed, using H\"older's inequality with exponent $\frac{1}{1-r}$ and its conjugate $\frac{1}{r}$, we obtain
\[
\int_\Omega |u_k - u|^{1-r} \,dx \le |\Omega|^{r} \left( \int_\Omega |u_k - u| \,dx \right)^{1-r}.
\]
Since $u_k \to u$ in $L^1(\Omega)$ (due to the strong convergence in $L^2(\Omega)$ and the boundedness of $\Omega$), the right-hand side tends to zero. Therefore,
\[
\int_\Omega u^{1-r} \,dx = \lim_{k \to \infty} \int_\Omega u_k^{1-r} \,dx \ge \beta > 0.
\]

This completes the proof of all five statements.
\end{proof}

Now we verify that the limit $(\lambda,u)$ satisfies the weak formulation of the original problem.

\begin{lemma}\label{lem:limit_solution}
Let $(\lambda, u)$ be the limit obtained in Lemma~\ref{lem:convergence}. Then for every $\varphi \in C_c^\infty(\Omega)$,
\begin{equation}\label{eq:weak_limit}
\int_\Omega \nabla u \nabla \varphi \,dx + \lambda \int_\Omega u \varphi \,dx = \int_\Omega u^{-r} \varphi \,dx + \int_\Omega u^{p-1} \varphi \,dx.
\end{equation}
Thus, $(\lambda,u)$ is a weak solution of \eqref{eq:main}.
\end{lemma}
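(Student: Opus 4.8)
The plan is to pass to the limit term-by-term in the weak formulation \eqref{eq:weak_form} of the regularized problem, tested against a fixed $\varphi \in C_c^\infty(\Omega)$, using the convergences collected in Lemma~\ref{lem:convergence}. Three of the four terms are routine, so I would dispatch them first and reserve the bulk of the work for the singular term.

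For the gradient term, the weak convergence $u_k \rightharpoonup u$ in $H_0^1(\Omega)$ (part (1) of Lemma~\ref{lem:convergence}) immediately gives $\int_\Omega \nabla u_k \nabla\varphi \to \int_\Omega\nabla u\nabla\varphi$, since $\nabla\varphi \in L^2(\Omega)$ is fixed. For the linear term, I would combine $\lambda_k \to \lambda$ (part (3)) with $\int_\Omega u_k\varphi \to \int_\Omega u\varphi$, the latter following from the $L^2$-convergence (part (2)) and the boundedness of $\varphi$. For the power term, the uniform bound $\|u_k\|_\infty \le C_\infty$ from Lemma~\ref{lem:uniform_bound} together with the a.e.\ convergence $u_k \to u$ shows $u_k^{p-1}\varphi \to u^{p-1}\varphi$ a.e.\ with the uniform domination $|u_k^{p-1}\varphi| \le C_\infty^{p-1}\|\varphi\|_\infty \mathbf{1}_K$, where $K := \operatorname{supp}\varphi \subset\subset \Omega$; the dominated convergence theorem then yields $\int_\Omega u_k^{p-1}\varphi \to \int_\Omega u^{p-1}\varphi$.

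The hard part is the singular term $\int_\Omega (u_k+\varepsilon_k)^{-r}\varphi$, where one must rule out blow-up of $(u_k+\varepsilon_k)^{-r}$ on $K$ and give meaning to $u^{-r}\varphi \in L^1$. The key is a uniform interior lower bound: I claim there is a fixed positive function lying below all $u_k$ on compact subsets. Since $\|u_k\|_\infty \le C_\infty$ and $\varepsilon_k \le 1$, we have $(u_k+\varepsilon_k)^{-r} \ge (C_\infty+1)^{-r}=:m_0 > 0$; and since $\lambda_k \le M$ (Lemma~\ref{lem:boundlambda_k}) with $u_k \ge 0$, the equation \eqref{eq:approx_problem} gives the differential inequality $-\Delta u_k + M u_k \ge m_0$ weakly in $\Omega$. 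Letting $\underline{u} \in H_0^1(\Omega)$ be the unique solution of $-\Delta\underline{u} + M\underline{u} = m_0$ with $\underline{u}=0$ on $\partial\Omega$, the weak maximum principle for the coercive operator $-\Delta + M$ (testing the inequality satisfied by $u_k - \underline{u}$ against $(u_k-\underline{u})^-$) yields $u_k \ge \underline{u}$ in $\Omega$ for all $k$. As $\underline{u} > 0$ in $\Omega$ by the strong maximum principle, setting $c_K := \min_{\overline{K}}\underline{u} > 0$ gives the uniform bound $u_k \ge c_K$ on $K$.

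With this lower bound in hand, the convergence of the singular term is again an application of dominated convergence: on $K$ we have $(u_k+\varepsilon_k)^{-r} \le c_K^{-r}$ uniformly, while $(u_k+\varepsilon_k)^{-r} \to u^{-r}$ a.e.\ on $K$ (here $u \ge c_K > 0$ on $K$, so $u^{-r}$ is finite and $u^{-r}\varphi \in L^1$). Hence $\int_\Omega (u_k+\varepsilon_k)^{-r}\varphi \to \int_\Omega u^{-r}\varphi$, and collecting the four limits gives \eqref{eq:weak_limit}. I expect the comparison step producing uniform interior positivity to be the only delicate point; everything else reduces to the uniform $L^\infty$ and Lagrange-multiplier bounds already established, and this same positivity is what makes the limiting singular term meaningful.
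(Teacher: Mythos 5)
Your proof is correct, and on the decisive step---uniform positivity of $u_k$ on $\operatorname{supp}\varphi$---it takes a genuinely different route from the paper. The paper obtains this lower bound via the Harnack inequality for $-\Delta+M$ (with $k$-independent constant), a finite covering of $U=\operatorname{supp}\varphi$ by balls, a contradiction sub-argument (if $u_k$ were uniformly small on every ball of the cover, then $u_{k_\ell}\to0$ uniformly on $U$, forcing $\int_U(u_{k_\ell}+\varepsilon_{k_\ell})^{-r}\varphi\,dx\to\infty$ while the tested equation's left-hand side stays bounded), and finally a chaining of Harnack balls to spread the bound over $U$. You instead observe that the uniform bound $\|u_k\|_\infty\le C_\infty$ of Lemma~\ref{lem:uniform_bound} together with $\varepsilon_k\le1$ makes the singular term itself bounded below, $(u_k+\varepsilon_k)^{-r}\ge(C_\infty+1)^{-r}=:m_0>0$, whence $-\Delta u_k+Mu_k\ge m_0$ weakly (using $\lambda_k\le M$ from Lemma~\ref{lem:boundlambda_k} and $u_k\ge0$), and then compare with the solution $\underline{u}$ of $-\Delta\underline{u}+M\underline{u}=m_0$, $\underline{u}\in H_0^1(\Omega)$: testing the inequality for $u_k-\underline{u}$ with $(u_k-\underline{u})^-$ and using the coercivity of $-\Delta+M$ (here $M>0$) gives $u_k\ge\underline{u}$ in all of $\Omega$, uniformly in $k$. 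Both arguments exploit the same mechanism---the singular nonlinearity cannot be small where $u$ is small---but yours does so pointwise rather than through an integral blow-up contradiction. Your route is shorter and arguably tighter: it avoids the Harnack chaining step, which the paper only sketches (``propagate \dots via a finite chain of intersecting balls''), and it produces a fixed positive minorant $\underline{u}>0$ on the whole of $\Omega$ rather than a constant on one compact set; as a by-product the limit satisfies $u\ge\underline{u}>0$, so Lemma~\ref{lem:positivity} would follow at once. The trade-off is that your argument leans essentially on the global $L^\infty$ bound to manufacture $m_0$, whereas the Harnack argument is in principle localizable; since Lemma~\ref{lem:uniform_bound} is available at this stage, nothing is lost. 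The remaining three terms are handled as in the paper up to cosmetics (you use dominated convergence with the $L^\infty$ bound for the power term, the paper uses $L^p$-convergence and H\"older), and your concluding dominated-convergence step for the singular term, with majorant $c_K^{-r}|\varphi|$ and a.e.\ convergence along a subsequence, coincides with the paper's.
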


\begin{proof}
Fix $\varphi \in C_c^\infty(\Omega)$. From Lemma~\ref{lem:minimizer}, each $(\lambda_k, u_k)$ satisfies the weak formulation
\begin{equation}\label{eq:weak_form_k}
\int_\Omega \nabla u_k \nabla \varphi \,dx + \lambda_k \int_\Omega u_k \varphi \,dx =  \int_\Omega (u_k+\varepsilon_k)^{-r} \varphi \,dx + \int_\Omega u_k^{\,p-1} \varphi \,dx.
\end{equation}
We will pass to the limit $k \to \infty$ in each term of \eqref{eq:weak_form_k}. By Lemma~\ref{lem:convergence}, we have along a subsequence (still denoted by the same indices) that $u_k \rightharpoonup u$ in $H_0^1(\Omega)$, $u_k \to u$ in $L^q(\Omega)$ for all $q \in [1,\infty)$, and $\lambda_k \to \lambda$.

We first consider the gradient term. Since $u_k \rightharpoonup u$ in $H_0^1(\Omega)$, for every $\varphi \in C_c^\infty(\Omega)$ we have
\begin{equation}\label{eq:limit_grad}
\int_\Omega \nabla u_k \nabla \varphi \,dx \longrightarrow \int_\Omega \nabla u \nabla \varphi \,dx.
\end{equation}
For the linear term, because $u_k \to u$ in $L^2(\Omega)$ and $\lambda_k \to \lambda$, we obtain
\begin{equation}\label{eq:limit_linear}
\lambda_k \int_\Omega u_k \varphi \,dx \longrightarrow \lambda \int_\Omega u \varphi \,dx.
\end{equation}
Next, we treat the power term. Since $p < 2^*$ and $u_k \to u$ in $L^p(\Omega)$, the sequence $\{u_k^{p-1}\}$ converges to $u^{p-1}$ in $L^{p/(p-1)}(\Omega)$. As $\varphi$ is bounded and has compact support, H\"older's inequality yields
\begin{equation}\label{eq:limit_power}
\int_\Omega u_k^{\,p-1} \varphi \,dx \longrightarrow \int_\Omega u^{p-1} \varphi \,dx.
\end{equation}
It remains to handle the singular term. We need to prove that
\begin{equation}\label{eq:limit_singular}
\int_\Omega (u_k+\varepsilon_k)^{-r} \varphi \,dx \longrightarrow \int_\Omega u^{-r} \varphi \,dx.
\end{equation}
Let $U := \operatorname{supp} \varphi \subset \subset \Omega$. We first show that the sequence $\{u_k\}$ is uniformly bounded away from zero on $U$.

Since $\lambda_k \le M$ for all $k$, each $u_k$ satisfies
\[
-\Delta u_k + M u_k\ge -\Delta u_k + \lambda_k u_k = (u_k+\varepsilon_k)^{-r} + u_k^{\,p-1} \ge 0 \quad \text{in } \Omega,
\]
the function $u_k$ is a nonnegative solution of the inequality $-\Delta u_k + M u_k \ge 0$ in $\Omega$. Consequently, the Harnack inequality for the operator $-\Delta + M$ applies with a constant that depends only on $M$, $N$, and $\Omega$, but not on $k$. Hence, there exists a constant $C_H > 0$, independent of $k$, such that for any ball $B_{2R}(x) \subset \Omega$,
\begin{equation}\label{eq:harnack}
   \sup_{B_R(x)} u_k \le C_H \inf_{B_R(x)} u_k. 
\end{equation}
Cover the compact set $U$ by finitely many balls $B_R(x_i)$ with $B_{2R}(x_i) \subset \Omega$. 


We claim that there exists a constant $c_0 > 0$, independent of $k$, such that for each $k$, at least one ball $B_R(x_{i_k})$ satisfies
\begin{equation}\label{eq:claim}
  \sup_{B_R(x_{i_k})} u_k \ge c_0.  
\end{equation}
Assume, by contradiction, that $c_0$ does not exist. 
Then for every integer $\ell \ge 1$, there exist infinitely many indices $k$ such that
\[
\sup_{B_R(x_i)} u_k < \frac{1}{\ell} \quad \text{for all } i = 1,\dots,m.
\]
We can select a strictly increasing sequence $\{k_\ell\}_{\ell=1}^\infty$ such that for each $\ell$,
\begin{equation}\label{eq:c0}
   \sup_{B_R(x_i)} u_{k_\ell} < \frac{1}{\ell} \quad \text{for all } i = 1,\dots,m. 
\end{equation}
Since $\{B_R(x_i)\}_{i=1}^m$ cover $U$, \eqref{eq:c0} implies that for every $x \in U$,
\[
u_{k_\ell}(x) < \frac{1}{\ell}.
\]
Hence, $u_{k_\ell} \to 0$ uniformly in $U$.
Now take a test function $\varphi \in C_c^\infty(\Omega)$ with $\operatorname{supp} \varphi = U$ and $\varphi > 0$ in $\operatorname{int}(U)$. Multiplying the equation
\[
-\Delta u_k + \lambda_k u_k = (u_k + \varepsilon_k)^{-r} + u_k^{p-1}
\]
by $\varphi$ and integrating by parts, we obtain
\begin{equation}\label{eq:lem3.5}
  \int_\Omega \nabla u_k \nabla \varphi \,dx + \lambda_k \int_\Omega u_k \varphi \,dx = \int_U (u_k + \varepsilon_k)^{-r} \varphi \,dx + \int_U u_k^{p-1} \varphi \,dx.  
\end{equation}
From Lemma~\ref{lem:uniform_bound}, the sequence $\{u_k\}$ is uniformly bounded in $L^\infty(\Omega)$, then by standard elliptic estimates, $\{u_k\}$ is uniformly bounded in $H_0^1(\Omega)$. Moreover, $\{\lambda_k\}$ is bounded. Thus, the left-hand side of \eqref{eq:lem3.5} remains bounded as $k \to \infty$. However, for the subsequence $\{u_{k_\ell}\}$, the right-hand side satisfies
\[
\int_U (u_{k_\ell} + \varepsilon_{k_\ell})^{-r} \varphi \,dx \ge \int_U u_{k_\ell}^{-r} \varphi \,dx > \ell^{\,r} \int_U \varphi \,dx \to \infty \quad \text{as } \ell \to \infty,
\]
while
\[
\int_U u_{k_\ell}^{p-1} \varphi \,dx \le \| u_{k_\ell} \|_{L^\infty(U)}^{p-1} \int_U \varphi \,dx \to 0.
\]
This contradicts the boundedness of the left-hand side. Therefore, the claim holds.

By \eqref{eq:harnack} and \eqref{eq:claim}, there exists a ball where $u_k$ has a uniform positive lower bound $\frac{c_0}{C_H}$. Since $U$ is covered by finitely many balls and the Harnack constant $C_H$ is independent of $k$, we can propagate this lower bound to the whole set $U$ via a finite chain of intersecting balls. Consequently, there exists a constant $c > 0$ such that
\[
u_k(x) \ge c \qquad \text{for all } x \in U \text{ and all } k.\]

Now, on $U$ we have
\[
0 \le (u_k+\varepsilon_k)^{-r} \le u_k^{-r} \le c^{-r},
\]
because $u_k \ge c$ and $\varepsilon_k \ge 0$. Moreover, since $u_k \to u$ in $L^1(\Omega)$, there exists a subsequence (still denoted by $u_k$) such that $u_k \to u$ almost everywhere. Also, $\varepsilon_k \to 0$. It follows that $(u_k+\varepsilon_k)^{-r} \to u^{-r}$ almost everywhere in $U$. The constant function $c^{-r}$ is integrable over $U$ (as $U$ has finite measure). Therefore, by the dominated convergence theorem,
\[
\int_U (u_k+\varepsilon_k)^{-r} \varphi \,dx \longrightarrow \int_U u^{-r} \varphi \,dx.
\]
Since $\varphi$ vanishes outside $U$, this yields \eqref{eq:limit_singular}.

Finally, combining the limits \eqref{eq:limit_grad}--\eqref{eq:limit_singular} and returning to equation \eqref{eq:weak_form_k}, we obtain
\[
\int_\Omega \nabla u \nabla \varphi \,dx + \lambda \int_\Omega u \varphi \,dx = \int_\Omega u^{-r} \varphi \,dx + \int_\Omega u^{p-1} \varphi \,dx,
\]
which is exactly the weak formulation \eqref{eq:weak_limit}. Hence, $(\lambda,u)$ is a weak solution of the original problem \eqref{eq:main}.
\end{proof}

Finally, we establish the strict positivity of the limiting solution.

\begin{lemma}\label{lem:positivity}
Let $(\lambda,u)$ be the weak solution obtained in Lemma~\ref{lem:limit_solution}. Then $u > 0$ in $\Omega$.
\end{lemma}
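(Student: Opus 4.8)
The plan is to turn the usual difficulty of this problem into an advantage: the singular term $u^{-r}$ is precisely what obstructs a naive application of the maximum principle to the full nonlinear equation, yet because $u\ge 0$ it contributes only a \emph{nonnegative} source. Discarding it (together with the other nonnegative terms) reveals that $u$ is a supersolution of a benign linear equation. Concretely, let $M>0$ be the upper bound for the Lagrange multipliers furnished by Lemma~\ref{lem:boundlambda_k}. I would first record that $u$ is a weak supersolution of $-\Delta u + Mu = 0$: for every nonnegative $\varphi\in C_c^\infty(\Omega)$, rewriting the weak formulation \eqref{eq:weak_limit} gives
\[
\int_\Omega \nabla u\,\nabla\varphi\,dx + M\int_\Omega u\varphi\,dx = \int_\Omega u^{-r}\varphi\,dx + \int_\Omega u^{p-1}\varphi\,dx + (M-\lambda)\int_\Omega u\varphi\,dx \ge 0,
\]
since every term on the right is nonnegative (using $u\ge 0$, $\varphi\ge 0$, and $\lambda\le M$). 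Thus $-\Delta u + Mu\ge 0$ weakly, with $M>0$.

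Next, since $u\in S_\rho$ we have $\|u\|_2^2=\rho>0$, so $u\not\equiv 0$. Applying the weak Harnack inequality to the nonnegative supersolution $u$ of the uniformly elliptic operator $-\Delta + M$ then yields that $u$ is bounded away from zero on compact sets: for each compact $U\subset\subset\Omega$ there is $c_U>0$ with $u\ge c_U$ a.e.\ in $U$ (here I use that $\Omega$ is connected to propagate the positivity throughout). Alternatively, this lower bound can be transferred directly from the approximating sequence, since the Harnack-chain argument in the proof of Lemma~\ref{lem:limit_solution}, culminating in \eqref{eq:harnack}--\eqref{eq:claim}, already produces a bound $u_k\ge c_U$ on $U$ that is uniform in $k$, and the a.e.\ convergence $u_k\to u$ from Lemma~\ref{lem:convergence} preserves it in the limit. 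Either way, $u>0$ almost everywhere in $\Omega$.

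It then remains to upgrade this almost-everywhere positivity to a pointwise statement, for which I would establish local continuity of $u$ by a regularity bootstrap. Fix a compact $U\subset\subset\Omega$ and a slightly larger compact $U'$ with $U\subset\subset\operatorname{int}(U')\subset\subset\Omega$. On $U'$ we have $u\ge c_{U'}>0$, so the right-hand side $u^{-r}+u^{p-1}-\lambda u$ is bounded in $L^\infty(U')$, using the lower bound together with the uniform $L^\infty$ bound $u\in L^\infty(\Omega)$ inherited through Lemma~\ref{lem:uniform_bound} and Lemma~\ref{lem:convergence}. Hence $-\Delta u\in L^\infty(U')$, and elliptic regularity gives $u\in W^{2,q}_{\mathrm{loc}}$ for every $q<\infty$, so $u\in C^{1,\alpha}_{\mathrm{loc}}(\Omega)$ by Sobolev embedding; in particular $u$ is continuous in $\Omega$. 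Combining continuity with $u\ge c_U>0$ a.e.\ on $U$ forces $u(x)\ge c_U>0$ for every $x\in U$, and since every point of $\Omega$ lies in some such compact set, I conclude $u>0$ in $\Omega$.

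The main obstacle is one of care rather than depth. The singular coefficient $u^{-r}$ degenerates exactly where $u$ might vanish, so the strong maximum principle cannot be applied directly to the nonlinear equation; the decisive step that removes this difficulty is the observation that all the nonlinear terms are nonnegative, which exposes the linear supersolution structure $-\Delta u + Mu\ge 0$. Once positivity a.e.\ is secured, it feeds back to bound $u^{-r}$ on compacts and close the regularity loop, so the only genuine subtlety is the passage from the a.e.\ lower bound to a pointwise one via continuity.
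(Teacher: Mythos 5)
Your proposal is correct, but it takes a genuinely different route from the paper's proof, and in fact a more careful one. The paper argues in the opposite order: it first invokes elliptic regularity to assert $u\in C^{1,\alpha}_{\mathrm{loc}}(\Omega)$, observes that $u^{-r}+u^{p-1}\ge 0$ gives $-\Delta u+\lambda u\ge 0$, and then excludes an interior zero by applying the strong maximum principle at a hypothetical minimum point. You instead extract the linear supersolution structure $-\Delta u+Mu\ge 0$ (using the uniform bound $\lambda\le M$ from Lemma~\ref{lem:boundlambda_k}), derive an almost-everywhere lower bound on compact sets from the weak Harnack inequality together with connectedness of $\Omega$ (or, alternatively, by transferring the $k$-uniform bound $u_k\ge c_U$ from the Harnack-chain step in the proof of Lemma~\ref{lem:limit_solution} through the a.e.\ convergence of Lemma~\ref{lem:convergence}), and only then bootstrap regularity, using that lower bound to control $u^{-r}$ locally before invoking $W^{2,q}$-estimates and Sobolev embedding. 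This ordering buys a genuine gain in rigor: the paper's regularity claim ``$u\in H_0^1(\Omega)\cap L^\infty(\Omega)$ implies $u\in C^{1,\alpha}_{\mathrm{loc}}(\Omega)$'' quietly presupposes control of the right-hand side, yet before positivity is established the singular term $u^{-r}$ is not known to be bounded (or in any $L^q$ with $q$ large) near a hypothetical zero of $u$, so the paper's argument has a latent circularity that yours eliminates. Your weak Harnack step requires no regularity at all --- only the supersolution inequality, $u\ge 0$, $u\not\equiv 0$, and connectedness --- and then legitimately feeds the regularity loop; the price is a somewhat longer argument, and the final upgrade from a.e.\ to pointwise positivity via continuity, which you handle explicitly, is exactly the step the shorter proof glosses over.
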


\begin{proof}
From Lemma~\ref{lem:convergence} we have $u \ge 0$ in $\Omega$ and $u \not\equiv 0$ because $\|u\|_2 = \sqrt{\rho} > 0$.
The weak formulation \eqref{eq:weak_limit} corresponds to the pointwise equation
\[
-\Delta u + \lambda u =  u^{-r} + u^{p-1} \quad \text{in } \Omega,
\]
which holds classically after applying elliptic regularity: since $u \in H_0^1(\Omega) \cap L^\infty(\Omega)$, we have $u \in C^{1,\alpha}_{\mathrm{loc}}(\Omega)$ for some $\alpha \in (0,1)$.
The right-hand side is nonnegative because $0<r<1$ and $u \ge 0$. Consequently, $u$ satisfies
\[
-\Delta u + \lambda u \ge 0 \quad \text{in } \Omega.
\]
If $u$ were zero at some interior point $x_0 \in \Omega$, then $x_0$ would be a minimum point of $u$.
By the strong maximum principle, a nonnegative solution of $-\Delta u + \lambda u \ge 0$ cannot attain an interior zero minimum unless it is identically zero.
Since $u \not\equiv 0$, we conclude that $u$ cannot vanish anywhere in $\Omega$; hence $u > 0$ in $\Omega$.
\end{proof}

\subsection{Proof of Theorem~\ref{thm:main}}

With all the pieces in place, we can now complete the proof of our main theorem.

\begin{proof}[Proof of Theorem~\ref{thm:main}]
The proof proceeds by constructing a solution as the limit of solutions to the regularized problems.

\textbf{Step 1: Choice of parameters.}
Let $\rho_0>0$ be the constant provided by Lemma~\ref{lem:geometry}. By Lemma~\ref{lem:negative_energy}, after possibly reducing $\rho_0$ further, we have $c_\varepsilon<0$ for every $\varepsilon\in(0,1]$. Fix an arbitrary $\rho\in(0,\rho_0)$.

\textbf{Step 2: Solutions of the regularized problems.}
For each $\varepsilon>0$, Lemma~\ref{lem:minimizer} yields a pair $(\lambda_\varepsilon,u_\varepsilon)$ satisfying
\[
-\Delta u_\varepsilon + \lambda_\varepsilon u_\varepsilon =(u_\varepsilon+\varepsilon)^{-r}+u_\varepsilon^{\,p-1}\quad\text{in }\Omega,
\]
with $u_\varepsilon>0$ in $\Omega$, $u_\varepsilon\in S_{\rho}$, and $\|\nabla u_\varepsilon\|_2<\tau$.

\textbf{Step 3: Passage to the limit.}
Take a sequence $\varepsilon_k\to0^+$ and set $(\lambda_k,u_k):=(\lambda_{\varepsilon_k},u_{\varepsilon_k})$.
Lemma~\ref{lem:boundlambda_k} shows that $\{\lambda_k\}$ is bounded in $\R$, and Lemma~\ref{lem:uniform_bound} gives a uniform bound for $\{u_k\}$ in $L^\infty(\Omega)$.
Hence, by Lemma~\ref{lem:convergence}, there exists a subsequence (still denoted by $\{(\lambda_k,u_k)\}$), a function $u\in H_0^1(\Omega)\cap L^\infty(\Omega)$, and a number $\lambda\in\R$ such that
\[
u_k\rightharpoonup u\ \text{weakly in }H_0^1(\Omega),\quad
u_k\to u\ \text{strongly in }L^q(\Omega)\ (\forall\,q\in[1,\infty)),\quad
\lambda_k\to\lambda,
\]
and $u\in S_{\rho}$ (so $u\ge0$ and $\int_\Omega u^2\,dx=\rho$). Moreover, $\int_\Omega u^{1-r}\,dx\ge\beta>0$.

\textbf{Step 4: Verification that the limit satisfies the original equation.}
According to Lemma~\ref{lem:limit_solution}, the limit pair $(\lambda,u)$ satisfies
\[
\int_\Omega\nabla u \nabla\varphi\,dx+\lambda\int_\Omega u\varphi\,dx
   =\int_\Omega u^{-r}\varphi\,dx+\int_\Omega u^{p-1}\varphi\,dx
\qquad\forall\varphi\in C_c^\infty(\Omega),
\]
which is exactly the definition of a weak solution to \eqref{eq:main}.

\textbf{Step 5: Positivity of the limit.}
Finally, Lemma~\ref{lem:positivity} guaranties that $u>0$ everywhere in $\Omega$.

Thus, for every $\rho\in(0,\rho_0)$ we have found a pair $(\lambda,u)\in \R\times(H_0^1(\Omega)\cap L^\infty(\Omega))$ with $u>0$ in $\Omega$ that solves \eqref{eq:main}. This completes the proof of Theorem~\ref{thm:main}.
\end{proof}

\section{Conclusion}\label{sec:conclusion}

In this paper, we have proved the existence of positive solutions to the singular elliptic problem \eqref{eq:main} with a prescribed $L^2$-norm. Our main result, Theorem~\ref{thm:main}, guaranties the existence of a solution when the prescribed mass $\rho>0$ is sufficiently small. The proof relies on a variational method combined with a regularization argument, and a crucial step is the derivation of uniform estimates via a blow-up analysis.


We expect that the approach and results presented here will serve as a useful foundation for the further study of singular elliptic equations with mass constraints.


\end{document}